\newtheorem{theorem}{Theorem}[section]
\newtheorem{lemma}[theorem]{Lemma}
\newtheorem{prop}[theorem]{Proposition}
\newtheorem{property}[theorem]{Property}
\newtheorem{axiom}[theorem]{Axiom}
\newtheorem{condition}[theorem]{Condition}
\theoremstyle{definition}
\theoremstyle{remark}
\numberwithin{equation}{section}
\newtheoremstyle{dotless}{}{}{}{}{\bfseries}{}{ }{}
\theoremstyle{dotless}
\newcommand{\C}{\mathbb{C}}
\newcommand{\Q}{\mathbb{Q}}
\newcommand{\Z}{\mathbb{Z}}
\newcommand{\F}{\mathbb{F}}
\newcommand{\res}[2]{\left(\frac{#1}{#2}\right)}
\renewcommand{\tilde}{\widetilde}
\begin{document}

\author{Ian Whitehead}
\title{Affine Weyl Group Multiple Dirichlet Series: Type $\tilde{A}$}
\maketitle

\begin{abstract} 
We define a multiple Dirichlet series whose group of functional equations is the Weyl group of the affine Kac-Moody root system $\tilde{A}_n$, generalizing the theory of multiple Dirichlet series for finite Weyl groups. The construction is over the rational function field $\F_q(t)$, and is based upon four natural axioms from algebraic geometry. We prove that the four axioms yield a unique series with meromorphic continuation to the largest possible domain and the desired infinite group of symmetries.
\end{abstract}

\section{Introduction}

We will construct a multiple Dirichlet series of the form 
\begin{align} \label{series}
&Z(x_0, x_1, \ldots x_n) \nonumber \\
&=\sum_{f_0, f_1, \ldots f_n \in \F_q[t] \text{ monic}} \res{f_0}{f_1}\res{f_1}{f_2}\cdots \res{f_{n-1}}{f_n}\res{f_n}{f_0} x_0^{\deg f_0}x_1^{\deg f_1}\cdots x_n^{\deg f_n}
\end{align}
where $n\geq 2$, $q$ is a prime power, and $\res{\,}{\,}$ denotes the quadratic residue character in $\F_q[t]$. This is a new generalization of the Weyl group multiple Dirichlet series developed in papers of Brubaker-Bump-Friedberg, Chinta-Gunnells, and others. The difference is that the product of characters here is based on the following Dynkin diagram:

\includegraphics[scale=.5]{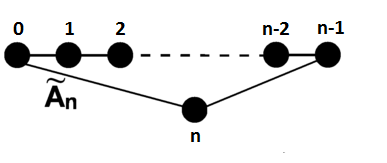}

\noindent which corresponds to the affine Kac-Moody root system $\tilde{A}_n$ rather than a finite root system. This gives the series a higher level of complexity: it will extend to a meromorphic function of $n+1$ variables with an infinite group of symmetries, the Weyl group of $\tilde{A}_n$. And it will shed light on the behavior of still-conjectural automorphic forms on the Kac-Moody Lie group. 

Multiple Dirichlet series originated as a tool to compute moments in families of L-functions. Goldfeld and Hoffstein computed the first moment of quadratic Dirichlet L-functions at the central point using a double Dirichlet series \cite{GH}; the second \cite{BFH3} and third \cite{DGH} moments have been computed using similar methods. An essential step is to replace the sums of characters like those appearing in \ref{series} with weighted sums of characters. This guarantees that the series will have a group of functional equations. The choice of weights, or correction polynomials, leads to difficult combinatorial questions, which inspired the development of Weyl group multiple Dirichlet series \cite{BBF1, BBF2, CG1, CG2}. This beautiful theory constructs, for any integer $N$ and (finite) root system $\Phi$, a multiple Dirichlet series built from $N$th power Gauss sums, whose group of functional equations is the Weyl group of $\Phi$. The series can be understood in terms of a metaplectic Casselman-Shalika formula, or as a generating function of combinatorial data on crystals associated with $\Phi$. One of the crowning achievements of this field is the Eisenstein conjecture, which connects multiple Dirichlet series back to automorphic forms; it states that each Weyl group multiple Dirichlet series appears as the constant coefficient in the Whittaker expansion of an Eisenstein series on the $N$-fold metaplectic cover of the algebraic group associated to $\Phi$. It is proven in types $A$ \cite{BBF1} and $B$ \cite{FZ}.

To go beyond the first three moments of quadratic L-functions requires multiple Dirichlet series with infinite Kac-Moody Weyl groups of functional equations. This immediately makes the series much more intricate--infinitely many symmetries imply infinitely many poles, which will accumulate at a natural boundary of meromorphic continuation. Furthermore, unlike in the finite case, the symmetries do not give sufficient information to completely pin down the series; we will show that there are infinitely many series satisfying the desired functional equations. Lee and Zhang \cite{LZ} generalize the averaging construction of Chinta and Gunnells to construct power series with meromorphic continuation to the boundary, satisfying the functional equations, for all symmetrizable Kac-Moody groups. However, their construction does not naturally contain the character sums and L-functions we would like our multiple Dirichlet series to count. Bucur and Diaconu \cite{BD}, in the special case of $\tilde{D}_4$, define a multiple Dirichlet series satisfying the functional equations by making an assumption about one of its residues. They use this series to compute the fourth moment of quadratic L-functions over rational function fields. Their construction is close in spirit to ours, but is slightly different and likely will not satisfy the Eisenstein conjecture. 

The new approach developed in forthcoming work of Diaconu and Pasol \cite{DP}, and explored in the author's thesis \cite{W}, is to construct multiple Dirichlet series axiomatically. There are four axioms \ref{twistedmult}-\ref{initialconditions}, which arise from the geometry of parameter spaces of hyperelliptic curves. Sums of characters, in particular of $\res{f_0}{f_1}\res{f_1}{f_2}\cdots \res{f_{n-1}}{f_n}\res{f_n}{f_0}$ as some of the $f_i$ range over fixed degree $a_i$ and others are held constant, can be interpreted as point counts on these parameter spaces. The weighted sums of characters introduced below are point counts after the spaces are desingularized and compactified. Axiom \ref{localglobal} is a duality statement, and Axiom \ref{dominance} is a cohomological purity statement. The axiomatic construction matches previous constructions in the case of finite root systems \cite{W}. The main theorem of this paper is that, in the case of the affine root system $\tilde{A}_n$, the four axioms produce a unique series, with meromorphic continuation to the boundary and functional equations. We expect the same theorem to hold for all affine root systems, and, if the axioms are modified as in \cite{DP}, for all Kac-Moody root systems.

Because this is a first foray into Kac-Moody multiple Dirichlet series, we have restricted our attention to type $\tilde{A}$ and to quadratic characters. It would certainly be feasible to replace these characters with $N$th power residue symbols or Gauss sums. Another restriction: our construction is over the rational function field $\F_q(t)$ only. Over number fields, proving meromorphic continuation of the analogous series is extremely difficult--in the case of $\tilde{D}_4$ over $\Q$, it is equivalent to computing the fourth moment of quadratic $L$-functions over $\Q$. However, the axioms do resolve all combinatorial problems in the construction over arbitrary global fields. The $p$-parts, or local weights, constructed in this paper will still be correct. They only need to be glued together using a different twisted multiplicativity relation for primes in the field. This means that $\F_q(t)$ plays a privileged role in the theory: it is the only field where Axioms \ref{localglobal} and \ref{dominance} hold, where the $p$-part of the series is reflected in the full series $Z$. 

The proof has three sections. First we show directly that the four axioms imply the desired functional equations of the series, and that any multivariable power series satisfying these functional equations is completely determined by its diagonal coefficients. These diagonal coefficients relate to imaginary roots in the $\tilde{A}_n$ root system, which play a subtle but critical role in the combinatorics. Next, we take a residue of the series, setting the odd-numbered $x_i$ to $q^{-1}$. We give a formula relating the diagonal residue coefficents to the original diagonal series coefficients; hence, the residue determines the full series, but unlike the series, the residue admits an Euler product formula. Balancing the effect of Axiom \ref{localglobal} on the residue coefficients against the effect of Axiom \ref{dominance} on the series coefficients, we prove the existence and uniqueness of the series. Finally, we combine this result with a close examination of the functional equations to prove an explicit formula for the residue, as a product of function field zeta functions. The meromorphic continuation of this product implies the meromorphic continuation of the full series $Z$. Only this last section does not generalize easily to other affine types. 

We end this introduction with the example of the $\tilde{A}_3$ residue formula, and its relevance to both analytic moment conjectures and the Eisenstein conjecture. We prove
\begin{align} \label{A3residue}
&q^2 \text{Res}_{x_1=x_3=q^{-1}} Z(x_0, x_1, x_2, x_3) \nonumber \\
&=\prod \limits_{m=0}^{\infty} (1-x_0^{2m+2}x_2^{2m})^{-1}(1-qx_0^{2m+2}x_2^{2m})^{-1} \nonumber \\
& \qquad \quad(1-x_0^{2m}x_2^{2m+2})^{-1}(1-qx_0^{2m}x_2^{2m+2})^{-1} \nonumber \\
& \qquad \quad (1-x_0^{2m+2}x_2^{2m+2})^{-2}(1-qx_0^{2m+2}x_2^{2m+2})^{-2} \nonumber \\ 
& \qquad \quad (1-x_0^{2m+1}x_2^{2m+1})^{-1}(1-qx_0^{2m+1}x_2^{2m+1})^{-1}.
\end{align}
We may evaluate the $\tilde{A}_3$ multiple Dirichlet series as 
\begin{align}
Z(q^{-s_0}, x, q^{-s_2}, x)&=\sum_{f_0, f_1, f_2, f_3} \res{f_1 f_3}{f_0 f_2} q^{-s_0 \deg f_0} q^{-s_2 \deg f_2} x^{\deg f_1 f_3} \nonumber \\
&=\sum_f L(s_0, \chi_f)L(s_2, \chi_f) \sigma_0(f) x^{\deg f}
\end{align}
where $f=f_1f_3$ and $\sigma_0(f)$ denotes the number of divisors of $f$. Then this residue, evaluated at $x_0=x_2=q^{-1/2}$, gives an asymptotic for the second moment of $L(1/2, \chi_f)$, weighted by $\sigma_0(f)$, over the function field $\F_q(t)$. It is possible to sieve for squarefree $f$ as well. Along with the fourth moment of $L(1/2, \chi_f)$, this is a first application of Kac-Moody root systems to number theory.

It is perhaps too early to precisely formulate an Eisenstein conjecture in the Kac-Moody setting; metaplectic Kac-Moody Eisenstein series are still conjectural, although recent work of Braverman, Garland, Kazhdan, Miller, and Patnaik makes progress constructing non-metaplectic Eisenstein series on affine Kac-Moody groups over function fields \cite{BK, BGKP, G, GMP}. In particular, a forthcoming paper of Patnaik \cite{P} gives the affine analogue of the Casselman-Shalika formula for Whittaker coefficients of these series. Crucially, Patnaik's formula contains poles corresponding to imaginary roots in the affine root system. We expect such poles to play an important role in Eisenstein series and their Whittaker coefficients for all Kac-Moody groups, including metaplectic. However, their presence cannot be detected from functional equations; different techniques are required. The series in this paper does have poles corresponding to imaginary roots. Of the factors in formula \ref{A3residue}, the first five correspond to real roots in the $\tilde{A}_3$ root system; the remaining three, however, correspond to imaginary roots. They can be compared to the factor $m$ and the factors indexed by imaginary roots $\alpha^{\vee}$ in Patnaik's formula $1.1$. Such factors do not appear in residues of the multiple Dirichlet series constructed by Lee and Zhang or Bucur and Diaconu. They provide some initial evidence that our construction is the correct one for the Eisenstein conjecture. 

\textbf{Acknowledgements}:
Great thanks to Adrian Diaconu and Dorian Goldfeld, who advised the dissertation project that led to this paper. Their wise advice and consistent support was indispensable at every stage of the process. Anna Pusk\'{a}s contributed a key idea that appears in section 5: the translation from $a$'s to $\delta$'s. I have had invaluable conversations about this project with Jeff Hoffstein, Ben Brubaker, Gautam Chinta, Paul Gunnells, Nikos Diamantis, Sol Friedberg, Kyu-Hwan Lee, Holley Friedlander, Manish Patnaik, and Jordan Ellenberg.

\section{Notation and Preliminaries}

Let $q$ be a prime power, with $q\equiv 1 \mod 4$, and let $\F_q$ be the field with $q$ elements. A polynomial in $\F_q[t]$ is called prime if it is monic and irreducible. For $f, g \in \F_q[t]$, let $\res{f}{g}$ denote the quadratic residue symbol, which is multiplicative in both $f$ and $g$. In this context, we have an extremely simple quadratic reciprocity law: for $f, g$ monic, $\res{f}{g}=\res{g}{f}$. 

We define the function field zeta function,
\begin{equation}
\zeta(x)=\sum_{f \in \F_q[t] \text{ monic}} x^{\deg f}=\prod_{p \in \F_q[t] \text{ prime}}(1-x^{\deg p})^{-1}=(1-qx)^{-1}
\end{equation}
and the quadratic Dirichlet L-function: for $g\in\F_q[t]$ monic and squarefree,
\begin{equation} 
L(x, \chi_g)=\sum_{f \in \F_q[t] \text{ monic}} \res{f}{g} x^{\deg f}=\prod_{p \in \F_q[t] \text{ prime}}(1-\res{p}{g}x^{\deg p})^{-1}.
\end{equation}
Usually, these are written as series in the variable $q^{-s}$; we have substituted the variable $x$ to emphasize the fact that these series are polynomials (or, in the case of $\zeta$, a rational function) in $x$. 

The quadratic L-functions satisfy the following functional equations: if $\deg g$ is odd, then
\begin{equation}
L(x, \chi_g) = (q^{1/2}x)^{\deg g -1} L(q^{-1}x^{-1}, \chi_g),
\end{equation}
and if $\deg g$ is even, then
\begin{equation}
(1-x)^{-1}L(x, \chi_g)= (q^{1/2}x)^{\deg g -2} (1-q^{-1}x^{-1})^{-1}L(q^{-1}x^{-1}, \chi_g).
\end{equation}
They also satisfy the Riemann hypothesis: all roots have $|x|=q^{-1/2}$.

The combinatorics of affine root systems play a critical behind-the-scenes role in the proofs below. However, for the sake of readability, we have suppressed this theory almost entirely in the exposition. Many of the type $\tilde{A}$ results here generalize to all affine types; see \cite{W}.

\section{Axioms and Functional Equations}

Let $q$ be a prime power, congruent to $1$ modulo $4$, and let $n\geq 2$ be an integer. For $f_0, f_1, \ldots f_n\in \F_q[t]$ monic, we wish to define local weights $H(f_0, f_1, \ldots f_n)\in \C$. Informally, $H(f_0, f_1, \ldots f_n)$ is a weighted version of 
\begin{equation}
\res{f_0}{f_1}\res{f_1}{f_2}\cdots\res{f_{n-1}}{f_n}\res{f_n}{f_0}
\end{equation}
where $\res{\,}{\,}$ denotes the quadratic residue symbol. The local weights determine global coefficients: for nonnegative integers $a_0, a_1, \ldots a_n$, 
\begin{equation}
c_{a_0, a_1, \ldots a_n}(q)=\sum_{\substack{f_0, f_1, \ldots f_n\in \F_q[t] \text{ monic} \\ \deg f_i=a_i}} H(f_0, f_1, \ldots f_n).
\end{equation}
We will give four axioms, originally due to Diaconu and Pasol \cite{DP}, which uniquely determine the local weights and global coefficients. These axioms describe the behavior of $c$ and $H$ as $q$ varies.

\begin{axiom}[Twisted Multiplicativity]\label{twistedmult}
Suppose that $\gcd(f_0 f_1 \cdots f_n, g_0 g_1 \cdots g_n)=1$. Then
\begin{equation}
H(f_0g_0, f_1g_1, \ldots f_ng_n)=H(f_0, f_1, \ldots f_n)H(g_0, g_1, \ldots g_n)\prod_{i \mod n+1} \res{f_i}{g_{i+1}}\res{g_i}{f_{i+1}}
\end{equation}.
\end{axiom}
With this axiom in hand, it suffices to describe $H(p^{a_0}, p^{a_1}, \ldots p^{a_n})$ for $p\in\F_q[t]$ prime. 
\begin{axiom}[Local-to-Global Principle]\label{localglobal}
The global coefficients $c_{a_0, a_1, \ldots a_n}(q)$ and local weights $H(p^{a_0}, p^{a_1}, \ldots p^{a_n})$ are polynomials in $q$ and $|p|:=q^{\deg p}$ respectively, of degrees at most $a_0+a_1+\cdots +a_n$, and 
\begin{equation}
H(p^{a_0}, p^{a_1}, \ldots p^{a_n})=|p|^{a_0+a_1+\cdots +a_n}c_{a_0, a_1, \ldots a_n}(|p|^{-1}).
\end{equation}
\end{axiom}
\begin{axiom}[Dominance Principle]\label{dominance}
The polynomial $H(p^{a_0}, p^{a_1}, \ldots p^{a_n})$ has degree less than $\frac{1}{2}(a_0+a_1+\cdots +a_n-1)$ in $|p|$; equivalently, $c_{a_0, a_1, \ldots a_n}(q)$ has terms in degrees greater than $\frac{1}{2}(a_0+a_1+\cdots +a_n+1)$. The only exceptions are for $H(1,\ldots 1)$, $H(1, \ldots 1, p, 1, \ldots 1)$, $c_{0,\ldots 0}(q)$, and $c_{0, \ldots 0, 1, 0, \ldots 0}(q)$.
\end{axiom}
The dominance principle will mean that the local weights are as small as possible under Axiom \ref{localglobal}. The final axiom is just a normalization condition:
\begin{axiom}[Initial Conditions]\label{initialconditions}
We have $H(1, \ldots 1, f_i, 1,\ldots 1)=1$ for all $f_i$, and $c_{0, \ldots 0, a_i, 0, \ldots 0}(q)=q^{a_i}$ for all $a_i$.
\end{axiom}

We define the quadratic $\tilde{A}_n$ multiple Dirichlet series over the rational function field $\F_q(t)$ as 
\begin{align}
Z(x_0,x_1,\ldots x_n):=&\sum_{f_0, f_1, \ldots f_n \in \F_q[t] \text{ monic}} H(f_0, f_1, \ldots f_n)x_0^{\deg f_0}x_1^{\deg f_1}\cdots x_n^{\deg f_n} \nonumber \\
&=\sum_{a_0, a_1, \ldots a_n \geq 0} c_{a_0, a_1, \ldots a_n}(q)x_0^{a_0}x_1^{a_1}\cdots x_n^{a_n}
\end{align}
The main theorem of this paper is as follows:
\begin{theorem}\label{main}
There exists a unique choice of local weights $H(f_0, f_1, \ldots f_n)$ and global coefficients $c_{a_0, a_1, \ldots a_n}(q)$ satisfying the four axioms. Moreover, these give rise to a multiple Dirichlet series $Z(x_0,x_1,\ldots x_n)$ with meromorphic continuation to $|x_0x_1\cdots x_n|<q^{-(n+1)/2}$ and group of functional equations isomorphic to the affine Weyl group of $\tilde{A}_n$. 
\end{theorem}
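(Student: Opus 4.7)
The plan is to follow the three-stage architecture previewed in the introduction. First I would derive the $\tilde{A}_n$ functional equations directly from Axioms \ref{twistedmult}--\ref{initialconditions}. Fixing an index $i$ and isolating the variable $f_i$ in the definition of $Z$, the inner sum is, up to twisted multiplicativity, a quadratic Dirichlet L-function in $x_i$ with modulus $f_{i-1}f_{i+1}$. Applying the two L-function functional equations from Section 2, together with the prescriptions of Axioms \ref{initialconditions} and \ref{twistedmult}, should yield the simple reflection $\sigma_i$ acting on $Z$ (up to a rational prefactor built from $\zeta$-factors) by $x_i \mapsto q^{-1}x_i^{-1}$, so that the full group of functional equations is the Weyl group of $\tilde{A}_n$. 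The second half of this step is rigidity: the orbit of a diagonal monomial $x_0^k x_1^k \cdots x_n^k$ under $\tilde{W}$ reduces to itself, because this direction corresponds to the null root $\alpha_0+\cdots+\alpha_n$, which is fixed by every element of $\tilde{W}$. I would argue that the functional equations determine every $c_{a_0,\ldots,a_n}(q)$ in terms of the diagonal coefficients $c_{k,\ldots,k}(q)$, which are the only remaining unknowns from the functional-equation viewpoint; pinning them down will require the remaining axioms.

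Next I would specialize the odd-indexed variables $x_1, x_3, \ldots$ to $q^{-1}$. At this specialization each inner sum over an odd-indexed $f_j$ degenerates to a function-field zeta function, producing a simple pole, and the iterated residue yields a series $R$ in the remaining even-indexed variables. A crucial feature is that at this specialization twisted multiplicativity collapses and $R$ admits an Euler product $R = \prod_p R_p$, with each $R_p$ an explicit polynomial in $|p|$ built from the local weights $H(p^{a_0},\ldots,p^{a_n})$. A combinatorial identity, obtained by tracking which tuples $(a_0,\ldots,a_n)$ contribute to each diagonal coefficient of $R$, shows that the diagonal coefficients of $R$ determine and are determined by those of $Z$. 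Existence and uniqueness of the local weights then follow from an induction on the total degree $k$: Axiom \ref{dominance} imposes an upper bound on $\deg_{|p|} H(p^k,\ldots,p^k)$, while Axiom \ref{localglobal}, applied to the residue's Euler product, forces a matching lower bound on the diagonal residue coefficient; the two bounds meet to pin down each $H(p^k,\ldots,p^k)$ uniquely.

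The final step is to promote this characterization into the explicit product formula of which \ref{A3residue} is the $n=3$ case. The subgroup of $\tilde{W}$ preserving the hyperplane $x_{\text{odd}}=q^{-1}$ acts on $R$; pairing this invariance with the Euler product should fix the factors corresponding to real roots of $\tilde{A}_n$ exactly as in the finite-type analysis, while the imaginary-root factors get fixed by the Axiom \ref{dominance}/Axiom \ref{localglobal} balance already used to establish uniqueness. Since the resulting product converges in $|x_0 x_2 \cdots| < q^{-1}$ and is a product of reciprocals of function-field zeta functions, it continues meromorphically; combining this with the functional-equation-driven reconstruction of $Z$ from its diagonal gives meromorphic continuation of $Z$ to the stated domain $|x_0 x_1 \cdots x_n| < q^{-(n+1)/2}$. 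I expect the principal obstacle to be identifying the imaginary-root Euler factors: the functional equations act trivially on the null-root direction and so cannot see them, and extracting them requires a delicate interplay between the Euler product structure of $R$ and the cohomological content of Axiom \ref{dominance} — precisely the interplay that makes the construction specific to $\F_q(t)$.
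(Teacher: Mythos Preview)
Your three-stage architecture matches the paper's, but there are two concrete gaps worth flagging.

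First, in Stage 1 you derive the functional equations from Axioms \ref{twistedmult} and \ref{initialconditions} alone. In the paper's actual argument (Proposition \ref{axiomsimplyfes}), the induction produces a \emph{coupled} functional equation involving both the global series $\Lambda$ and the local $p$-part series $\lambda$; it is Axiom \ref{dominance} that separates them, by comparing terms of high versus low degree in $q$. Without invoking Dominance at this stage you cannot close the induction, because the correction factors $\mu_p$ only satisfy the right functional equation once you know the local $\lambda$ does.

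Second, and more seriously, your passage from ``$R$ is meromorphic'' to ``$Z$ is meromorphic'' in Stage 3 does not go through as written. The functional-equation reconstruction of $Z$ from its diagonal is a statement about \emph{formal power series}, not about meromorphic functions; knowing the diagonal coefficients as a meromorphic one-variable function does not by itself produce a meromorphic $(n+1)$-variable function, because the Weyl-group orbit of poles accumulates at the boundary. The paper closes this gap by importing an auxiliary series $Z_{\text{avg}}$ from the Chinta--Gunnells/Lee--Zhang averaging construction (Proposition \ref{averaging}), which is already known to be meromorphic in the full domain and to satisfy the same functional equations. Then $Z/Z_{\text{avg}}$ is a one-variable diagonal series $Q(x_0\cdots x_n)$, and comparing residues gives $Q$ as the ratio $R/R_{\text{avg}}$, which is meromorphic. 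Hence $Z = Z_{\text{avg}}\cdot Q$ is meromorphic. You will need some such external input.

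A smaller point: in Stage 3 you say the imaginary-root factors ``get fixed by the Axiom \ref{dominance}/\ref{localglobal} balance already used to establish uniqueness.'' That balance shows the factors are \emph{determined}, but to prove meromorphic continuation you must actually \emph{compute} them. The paper does this by evaluating the constant term in $q$ of the universal ratio $P(x)$ via a combinatorial argument: applying the recurrences \ref{oddrecurrence}--\ref{evenrecurrence} in a specific order, reinterpreting the resulting count of reduction paths as a count of tuples of integer partitions (via the substitution $\delta_i^{(j)} = a_{i+j-1}^{(j-1)} - a_{i+j-2}^{(j)}$), and matching against an explicit partition generating function. This is where the bulk of the type-specific work lies, and it is not a consequence of the uniqueness argument.
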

We will prove the last statement first; that is, assuming we have chosen weights and coefficients satisfying the axioms, the resulting series satisfies the functional equations. For now, these functional equations are identities of formal power series only, but they hold as identities of functions once $Z$ is meromorphically continued. We will require some additional definitions of single-variable series contained within $Z$: let
\begin{align}
&\Lambda_{a_0, \ldots \hat{a_i}, \ldots a_n}(x_i)=\sum_{a_i \geq 0} c_{a_0, \ldots a_i, \ldots a_n}(q)x_i^{a_i} \\
&\lambda_{p^{a_0}, \ldots \hat{p^{a_i}}, \ldots p^{a_n}}(x_i)=\sum_{a_i \geq 0} H(p^{a_0}, \ldots p^{a_i}, \ldots p^{a_n})x_i^{a_i \deg p} \\
&L_{f_0, \ldots \hat{f_i}, \ldots f_n}(x_i)=\sum_{f_i \in \F_q[t] \text{ monic}} H(f_1, \ldots f_i, \ldots f_n) x_i^{\deg f_i} 
\end{align}
where we use the notation $\hat{a}$ or $\hat{f}$ for an omitted index. The local series $\lambda$ can be obtained from the global series $\Lambda$ by substituting $q\mapsto |p|^{-1}$, $x_i \mapsto |p|x_i^{\deg p}$ and multiplying by $|p|^{a_0+\cdots+\hat{a_i}+\cdots+a_n}$.
\begin{prop}\label{axiomsimplyfes}
Fix all but one $a_i$, for $i$ modulo $n+1$. If $a_{i-1}+a_{i+1}$ is odd, then $\Lambda_{a_0, \ldots \hat{a_i}, \ldots a_n}(x_i)$ and $\lambda_{p^{a_0}, \ldots \hat{p^{a_i}}, \ldots p^{a_n}}(x_i)$ are polynomials of degrees $a_{i-1}+a_{i+1}-1$, $(a_{i-1}+a_{i+1}-1)\deg p$ respectively, satisfying:
\begin{align}
&(q^{1/2} x_i)^{a_{i-1}+a_{i+1}-1} \Lambda_{a_0, \ldots \hat{a_i}, \ldots a_n}(q^{-1}x_i^{-1}) = \Lambda_{a_0, \ldots \hat{a_i}, \ldots a_n}(x_i) \\ 
&(q^{1/2} x_i)^{(a_{i-1}+a_{i+1}-1)\deg p} \lambda_{p^{a_0}, \ldots \hat{p^{a_i}}, \ldots p^{a_n}}(q^{-1}x_i^{-1}) = \lambda_{p^{a_0}, \ldots \hat{p^{a_i}}, \ldots p^{a_n}}(x_i). 
\end{align}
If $a_{i-1}+a_{i+1}$ is even, then these series are rational functions with denominators $1-qx_i$, $1-x_i^{\deg p}$, and numerators of degrees $a_{i-1}+a_{i+1}$, $(a_{i-1}+a_{i+1})\deg p$ respectively, satisfying:
\begin{align}
&(q^{1/2} x_i)^{a_{i-1}+a_{i+1}}(1-x_i^{-1})\Lambda_{a_0, \ldots \hat{a_i}, \ldots a_n}(q^{-1}x_i^{-1}) = (1-qx_i)\Lambda_{a_0, \ldots \hat{a_i}, \ldots a_n}(x_i) \\ 
&(q^{1/2} x_i)^{(a_{i-1}+a_{i+1})\deg p}(1-q^{-\deg p}x_i^{-\deg p})\lambda_{p^{a_0}, \ldots \hat{p^{a_i}}, \ldots p^{a_n}}(q^{-1}x_i^{-1}) \nonumber \\
& \quad = (1-x_i^{\deg p})\lambda_{p^{a_0}, \ldots \hat{p^{a_i}}, \ldots p^{a_n}}(x_i).
\end{align}
\end{prop}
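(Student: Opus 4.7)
The substitution $q \mapsto |p|^{-1}$, $x_i \mapsto |p| x_i^{\deg p}$ described just before the proposition relates $\Lambda$ and $\lambda$ and intertwines the two pairs of claimed functional equations, so it suffices to prove the statement for $\Lambda$. My strategy is to express $\Lambda$ in terms of quadratic L-functions and deduce the functional equation from the classical L-function functional equations recalled in Section 2.

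Begin by splitting
\[
\Lambda_{a_0, \ldots, \hat{a_i}, \ldots, a_n}(x_i) = \sum_{\substack{(f_j)_{j \neq i} \text{ monic} \\ \deg f_j = a_j}} L_{f_0, \ldots, \hat{f_i}, \ldots, f_n}(x_i).
\]
Fix such a tuple, let $F = \prod_{j \neq i} f_j$, and decompose $f_i = f_i^{(c)} f_i^{(r)}$ into its coprime-to-$F$ and supported-on-$F$ parts. Axiom \ref{twistedmult}, combined with Axiom \ref{initialconditions} and quadratic reciprocity, yields
\[
H(f_0, \ldots, f_i^{(c)} f_i^{(r)}, \ldots, f_n) = H(f_0, \ldots, f_i^{(r)}, \ldots, f_n) \cdot \res{f_i^{(c)}}{f_{i-1}f_{i+1}},
\]
and summing over $f_i^{(c)}$ and $f_i^{(r)}$ separately factorizes
\[
L_{f_0, \ldots, \hat{f_i}, \ldots, f_n}(x_i) = L^{(F)}\bigl(x_i, \chi_{f_{i-1}f_{i+1}}\bigr) \cdot M_{(f_j)}(x_i),
\]
where $L^{(F)}$ is the quadratic L-function with Euler factors at primes of $F$ removed and $M_{(f_j)}(x_i) = \sum_{f_i^{(r)}} H(f_0, \ldots, f_i^{(r)}, \ldots, f_n) x_i^{\deg f_i^{(r)}}$. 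The L-function functional equation from Section 2 (in the form appropriate to the parity of $\deg f_{i-1}f_{i+1} = a_{i-1} + a_{i+1}$), combined with a local FE for $M_{(f_j)}$, then gives the FE for each $L_{f_0, \ldots, \hat{f_i}, \ldots, f_n}$; summing over $(f_j)$ propagates this to $\Lambda$, since the FE parameters depend only on $a_{i-1} + a_{i+1}$.

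The main obstacle is the local functional equation for $M_{(f_j)}$. By iterated twisted multiplicativity, modulo residue-symbol twists between distinct primes of $F$, $M_{(f_j)}$ further factors into a product of local series $\lambda_{p^{v_p(f_0)}, \ldots, \hat{p^{v_p(f_i)}}, \ldots, p^{v_p(f_n)}}(x_i)$ indexed by primes $p \mid F$. By Axiom \ref{localglobal}, each such local series is obtained from a smaller $\Lambda$-polynomial by reversing powers under $q \leftrightarrow |p|^{-1}$, which supplies the symmetry $x_i \mapsto q^{-1} x_i^{-1}$; Axiom \ref{dominance} and its exceptional cases then pin down the exact shape (polynomial vs.\ rational with denominator $1 - x_i^{\deg p}$). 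The delicate point is that the parity of $v_p(f_{i-1}) + v_p(f_{i+1})$ at an individual prime need not match the global parity of $a_{i-1} + a_{i+1}$; the resulting extra $(1 - qx_i)^{\pm 1}$ factors must cancel precisely against the Euler factors $(1 - \res{p}{f_{i-1}f_{i+1}} x_i^{\deg p})^{\pm 1}$ removed to form $L^{(F)}$. Controlling this parity bookkeeping is the core difficulty of the proof.
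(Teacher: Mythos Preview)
Your overall architecture matches the paper's: write $\Lambda$ as a sum of $L_{f_0,\ldots,\hat{f_i},\ldots,f_n}$, express each $L$ as a quadratic L-function times local $\lambda$ correction factors, and import the classical functional equation. But there is a circularity you have not resolved. Your proof of the $\Lambda$ functional equation requires, for each prime $p\mid F$, the functional equation for $\lambda_{p^{v_p(f_0)},\ldots,\widehat{p^{v_p(f_i)}},\ldots,p^{v_p(f_n)}}$, which by the very intertwining you cite is equivalent to the $\Lambda$ functional equation at the index $(v_p(f_0),\ldots,\widehat{v_p(f_i)},\ldots,v_p(f_n))$. So the argument must be an induction on $\sum_{j\ne i}a_j$; you never set this up, and more importantly the induction \emph{breaks} when $p$ has degree $1$ and every $f_j=p^{a_j}$, since then $\sum_{j\ne i}v_p(f_j)=\sum_{j\ne i}a_j$ and the needed $\lambda$ sits at the same inductive level, not a ``smaller'' one.

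This is not a technicality: there are $q$ such exceptional tuples (one per linear prime), so what the L-function argument actually delivers is a functional equation for $\Lambda - q\lambda$ (with $\deg p=1$) rather than for $\Lambda$ alone. The paper's key step, which your plan omits, is to invoke Axiom~\ref{dominance} here: the $q$-degrees of the coefficients of $\Lambda$ lie strictly above $\tfrac12(a_0+\cdots+a_n+1)$ while those of $q\lambda$ lie on the other side, so the combined functional equation separates term-by-term into the two claimed ones. Your proposal uses dominance only to ``pin down the exact shape'' and flags the local parity bookkeeping as the core difficulty; in fact that bookkeeping is routine (the paper handles it in a few lines via the correction polynomials $\mu_p$), and the decoupling of $\Lambda$ from $\lambda$ via dominance is the substantive idea you are missing.
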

\begin{proof}
Notice that in each case, the functional equations of $\Lambda$ and $\lambda$ are equivalent. The proof is by induction on $\sum_{j\neq i}a_j$. If $\sum_{j\neq i}a_j=0$, the proposition follows from Axiom \ref{initialconditions}. For the inductive step, fix $f_0, \ldots \hat{f_i}, \ldots f_n \in \F_q[t]$ monic of degrees $a_0, \ldots \hat{a_i}, \ldots a_n$. Then, by Axiom \ref{twistedmult}, we have the following Euler product formula:
\begin{align}
&L_{f_1, \ldots \hat{f_i}, \ldots f_{n+1}}(x_i) \nonumber \\
&=\left(\prod_{j\neq i, i-1} \prod_{p|f_j} \res{p^{v_p(f_j)}}{p^{-v_p(f_{j+1})}f_{j+1}}\right) \nonumber \\ & \prod_p \left(\sum_{a_i=0}^{\infty} H(p^{v_p(f_0)},\ldots p^{a_i}, \ldots p^{v_p(f_n)}) \res{p^{a_i}}{p^{-v_p(f_{i-1}f_{i+1})}f_{j-1}f_{j+1}} x_i^{a_i \deg p}\right)
\end{align}
where $v_p(f)$ denotes the number of times $p$ divides $f$. Moreover, if we set $g$ to be the squarefree part of $f_{i-1}f_{i+1}$, then all but finitely many of the Euler factors match those of 
\begin{equation}
L(x_i, \chi_g)=\prod_p (1-\res{p}{g}x_i^{\deg p})^{-1}.
\end{equation}
The only primes whose Euler factors do not match are those where $p|f_0\cdots\hat{f_i}\cdots f_n$. At such a prime, the ratio of the Euler factors is
\begin{align}
&\mu_p(x_i)=\frac{\lambda_{p^{v_p(f_0)}, \ldots \hat{p^{v_p(f_i)}}, \ldots p^{v_p(f_n)}}(\res{p}{p^{-v_p(g)}g} x_i)}{(1-\res{p}{g}x_i^{\deg p})^{-1}} \nonumber \\
&=\left\lbrace \begin{array}{cc} \lambda_{p^{v_p(f_0)}, \ldots \hat{p^{v_p(f_i)}}, \ldots p^{v_p(f_n)}}(\pm x_i) & \text{if }v_p(f_{i-1}f_{i+1}) \text{ odd} \\ (1 \mp x_i^{\deg p}) \lambda_{p^{v_p(f_0)}, \ldots \hat{p^{v_p(f_i)}}, \ldots p^{v_p(f_n)}}(\pm x_i) & \text{if } v_p(f_{i-1}f_{i+1}) \text{ even} \end{array} \right. 
\end{align}
Assuming that the proposition holds for $\lambda_{p^{v_p(f_0)}, \ldots \hat{p^{v_p(f_i)}}, \ldots p^{v_p(f_n)}}$, then this ratio
is a polynomial of degree $d_p=(v_p(f_{i-1}f_{i+1})-1)\deg p$ if $v_p(f_{i-1}f_{i+1})$ is odd, or $v_p(f_{i-1}f_{i+1})\deg p$ if $v_p(f_{i-1}f_{i+1})$ is even, with functional equation
\begin{equation}
(q^{1/2}x_i)^{d_p} \mu_p(q^{-1}x_i^{-1})=\mu_p(x_i).
\end{equation}
Combining these local functional equations with the functional equation of $L(x_i, \chi_g)$, we obtain for $a_{i-1}+a_{i+1}$ odd:
\begin{equation}
(q^{1/2}x_i)^{a_{i-1}+a_{i+1}-1}L_{f_1, \ldots \hat{f_i}, \ldots f_{n+1}}(q^{-1}x_i^{-1})=L_{f_1, \ldots \hat{f_i}, \ldots f_{n+1}}(x_i)
\end{equation}
and for $a_{i-1}+a_{i+1}$ even:
\begin{equation}
(q^{1/2}x_i)^{a_{i-1}+a_{i+1}}(1-x_i^{-1})L_{f_1, \ldots \hat{f_i}, \ldots f_{n+1}}(q^{-1}x_i^{-1})=(1-qx_i)L_{f_1, \ldots \hat{f_i}, \ldots f_{n+1}}(x_i).
\end{equation}

By definition, we have
\begin{equation}
\Lambda_{a_0, \ldots \hat{a_i}, \ldots a_n}(x_i)=\sum_{\substack{f_0, \ldots \hat{f_i}, \ldots f_n \\ \deg f_j=a_j}} L_{f_0, \ldots \hat{f_i}, \ldots f_{n+1}}(x_i)
\end{equation}
and we may assume inductively that each $L_{f_0, \ldots \hat{f_i}, \ldots f_{n+1}}(x_i)$ satisfies the desired functional equations, except for 
\begin{align}
&L_{p^{a_0}, \ldots \hat{p^{a_i}}, \ldots p^{a_n}}(x_i) \nonumber \\
&=\left\lbrace \begin{array}{cc} \lambda_{p^{a_0}, \ldots \hat{p^{a_i}}, \ldots p^{a_n}}(x_i) & \text{if } a_{i-1}+a_{i+1} \text{ odd} \\ \frac{1-x_i}{1-qx_i} \lambda_{p^{a_0}, \ldots \hat{p^{a_i}}, \ldots p^{a_n}}(x_i) & \text{if } a_{i-1}+a_{i+1} \text{ even} \end{array} \right.
\end{align}
when $p$ is a prime of degree $1$. We conclude, in the case of $a_{i-1}+a_{i+1}$ odd:
\begin{align}
&(q^{1/2}x_i)^{a_{i-1}+a_{i+1}-1}(\Lambda_{a_0, \ldots \hat{a_i}, \ldots a_n}(q^{-1}x_i^{-1})-q\lambda_{p^{a_0}, \ldots \hat{p^{a_i}}, \ldots p^{a_n}}(q^{-1}x_i^{-1})) \nonumber \\ &=\Lambda_{a_0, \ldots \hat{a_i}, \ldots a_n}(x_i)-q\lambda_{p^{a_0}, \ldots \hat{p^{a_i}}, \ldots p^{a_n}}(x_i)
\end{align}
and in the case of $a_{i-1}+a_{i+1}$ even:
\begin{align}
&(q^{1/2}x_i)^{a_{i-1}+a_{i+1}}((1-x_i^{-1})\Lambda_{a_0, \ldots \hat{a_i}, \ldots a_n}(q^{-1}x_i^{-1})\nonumber \\
& \quad -q(1-q^{-1}x_i^{-1})\lambda_{p^{a_0}, \ldots \hat{p^{a_i}}, \ldots p^{a_n}}(q^{-1}x_i^{-1})) \nonumber \\ &=(1-qx_i)\Lambda_{a_0, \ldots \hat{a_i}, \ldots a_n}(x_i)-q(1-x_i)\lambda_{p^{a_0}, \ldots \hat{p^{a_i}}, \ldots p^{a_n}}(x_i).
\end{align}
Finally, we apply Axiom \ref{dominance}. Consider the above two functional equations as identities of power series in $x_i$ and $q$. Comparing the terms whose degree in $q$ exceeds $\frac{1}{2}(a_0+\cdots+\hat{a_i}+\cdots+a_n)$ plus half their degree in $x_i$ yields the desired functional equation for $\Lambda$. Comparing the remaining terms yields the desired functional equation for $\lambda$. 
\end{proof}

For $i$ modulo $n+1$ let $\sigma_i:\C^{n+1}\to \C^{n+1}$ be given by
\begin{equation}
(\sigma_i(x_0, x_1, \ldots x_n))_j=\left\lbrace \begin{array}{cc} q^{-1}x_i^{-1} & \text{if } j\equiv i \\ q^{1/2}x_ix_j & \text{if } j\equiv i\pm 1 \\ x_j & \text{otherwise}. \end{array}\right.
\end{equation}
These transformations generate the group 
\begin{equation}
<\sigma_i|\sigma_i^2=1, \sigma_i\sigma_{i+1}\sigma_i=\sigma_{i+1}\sigma_i\sigma_{i+1}, \sigma_i\sigma_j=\sigma_j\sigma_i \text{ for } j\neq i\pm 1>
\end{equation}
which is the $\tilde{A}_n$ affine Weyl group. Then $Z$ has functional equations
\begin{align}
& Z_{a_{i+1}+a_{i-1} \text{ odd}}(\sigma_i(x_0,\ldots x_n))=q^{1/2} x_i Z_{a_{i+1}+a_{i-1} \text{ odd}}(x_0, \ldots x_n), \label{oddfe}\\
& (1-x_i^{-1}) Z_{a_{i+1}+a_{i-1} \text{ even}}(\sigma_i(x_0, \ldots x_n))= (1-qx_i) Z_{a_{i+1}+a_{i-1} \text{ even}}(x_0, \ldots x_n) \label{evenfe}
\end{align}
where $Z_{a_{i+1}+a_{i-1} \text{ odd/even}}$ denotes sum of terms $c_{a_0,\ldots a_n}(q)x_0^{a_0}\cdots x_n^{a_n}$ with $a_{i-1}+a_{i+1}$ odd or even, respectively.

We may also define the $p$-part of $Z$,
\begin{equation}
Z_p(x_0, \ldots x_n)=\sum_{a_0, \ldots a_n} H(p^{a_0}, \ldots p^{a_n})x_0^{a_0\deg p}\cdots x_n^{a_n\deg p}
\end{equation}
and obtain similar local functional equations.

The functional equations are identities of formal power series, which may be translated into linear recurrences on the coefficients. If $a_{i-1}+a_{i+1}$ is odd, then  we have 
\begin{equation} \label{oddrecurrence}
c_{\ldots a_i, \ldots}(q)=q^{a_i-(a_{i-1}+a_{i+1}-1)/2} c_{\ldots a_{i-1}+a_{i+1}-1-a_i, \ldots}(q) 
\end{equation}
and if $a_{i-1}+a_{i+1}$ is even, then 
\begin{align} \label{evenrecurrence}
&c_{\ldots a_i, \ldots}(q) \nonumber \\ 
&=q c_{\ldots a_i-1, \ldots}(q) + q^{a_i-(a_{i-1}+a_{i+1})/2}(c_{\ldots a_{i-1}+a_{i+1}-a_i, \ldots}(q)-q c_{a_{i-1}+a_{i+1}-a_i-1}(q)).
\end{align}
Note that any coefficient with $a_i > \frac{1}{2}(a_{i-1}+a_{i+1})$ can be rewritten in terms of lower coefficients. The only undetermined coefficients are the diagonals, $c_{a, a, \ldots a}(q)$.

On the other hand, the transformations $\sigma_i$ leave the form $x_0x_1\cdots x_n$ invariant, so we may multiply $Z$ by an arbitrary power series in this variable, thereby obtaining an arbitrary family of diagonal coefficients, without affecting the functional equations. We have proved the following:

\begin{prop}\label{diagonal}
A series $Z(x_0, \ldots x_n)=\sum\limits_{a_0, \ldots a_n} c_{a_0, \ldots a_n}(q)x_0^{a_0}\cdots x_n^{a_n}$ which satisfies the functional equations \ref{oddfe} and \ref{evenfe} is uniquely determined by its diagonal coefficients $c_{a, \ldots a}(q)$. 
\end{prop}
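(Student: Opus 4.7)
Since the functional equations \ref{oddfe} and \ref{evenfe} are linear in $Z$, it suffices to prove uniqueness for the difference: if $D(x_0,\ldots,x_n) = \sum c_{\mathbf a}(q)\, x_0^{a_0}\cdots x_n^{a_n}$ satisfies the functional equations and has $c_{a,a,\ldots,a}(q) = 0$ for all $a \geq 0$, then $D \equiv 0$. To each multi-index $\mathbf a = (a_0,\ldots,a_n)$ I assign its \emph{signature} $\sigma(\mathbf a)$, defined as the tuple of entries rearranged in weakly decreasing order. Signatures are ordered lexicographically, giving a well-ordering under which the constant (diagonal) tuples are exactly those whose entries are all equal. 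The proof then reduces to showing that whenever $\mathbf a$ is non-constant, the recurrences \ref{oddrecurrence}--\ref{evenrecurrence} express $c_{\mathbf a}(q)$ as a linear combination of $c_{\mathbf a'}(q)$ for a finite list of multi-indices $\mathbf a'$ with $\sigma(\mathbf a') < \sigma(\mathbf a)$.

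The heart of this reduction is an elementary observation. If $\mathbf a$ is not constant, let $M = \max_j a_j$ and choose $i \in \Z/(n+1)\Z$ with $a_i = M$ and $\min(a_{i-1}, a_{i+1}) < M$; such an $i$ exists cyclically because $\mathbf a$ is non-constant. Then $a_{i-1} + a_{i+1} \leq 2M - 1 < 2a_i$, so $a_i > \tfrac{1}{2}(a_{i-1} + a_{i+1})$ and the corresponding recurrence at index $i$ applies. The odd case \ref{oddrecurrence} rewrites $c_{\mathbf a}$ in terms of one coefficient whose $i$-th entry is $a_{i-1} + a_{i+1} - 1 - a_i$; the even case \ref{evenrecurrence} in terms of three coefficients whose $i$-th entries are $a_i - 1$, $a_{i-1} + a_{i+1} - a_i$, and $a_{i-1} + a_{i+1} - a_i - 1$. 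Each of these new $i$-th entries is strictly less than $M$ by the strict inequality just established, and all other entries are unchanged.

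Consequently, every $\mathbf a'$ appearing on the right-hand side is obtained from $\mathbf a$ by replacing one occurrence of the maximum value $M$ with a strictly smaller value, which strictly decreases the sorted-decreasing signature in the lexicographic order: $\sigma(\mathbf a') < \sigma(\mathbf a)$. Transfinite induction on $\sigma$ then finishes the proof. The base case consists of constant tuples, whose coefficients vanish by hypothesis; the inductive step is the reduction just described. Hence $c_{\mathbf a}(q) = 0$ for all $\mathbf a$, and $D \equiv 0$.

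The only delicate point is choosing a monovariant that genuinely decreases under every branch of both recurrences rather than just under some branches, since the even recurrence introduces three daughter tuples at once. The sorted-decreasing signature works because the recurrences touch only a single coordinate, and the selection rule for $i$ (a maximum entry with at least one strictly smaller cyclic neighbor) is precisely what forces $a_i > \tfrac{1}{2}(a_{i-1}+a_{i+1})$ and hence strict descent in all daughter entries at position $i$. Everything else is bookkeeping; no input beyond the recurrences already derived in the excerpt is required.
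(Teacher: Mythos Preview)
Your argument is correct and follows the same idea as the paper, which simply notes that whenever $\mathbf a$ is non-diagonal some $a_i>\tfrac12(a_{i-1}+a_{i+1})$, so the recurrences \ref{oddrecurrence}--\ref{evenrecurrence} rewrite $c_{\mathbf a}$ in terms of ``lower'' coefficients. You have made the paper's informal word ``lower'' precise by introducing the sorted-decreasing signature with lexicographic order, and by selecting $i$ at a maximum entry with a strictly smaller cyclic neighbor; one tiny point worth stating explicitly is that when a daughter index at position $i$ becomes negative the corresponding coefficient is zero by the power-series convention, so those branches terminate immediately.
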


In fact, by this proposition, the ratio of two power series satisfying the functional equations must be a diagonal series. By inspecting the recursive formulas, one also sees that if the  $c_{a, \ldots a}(q)$ satisfy Axiom \ref{dominance}, then so do all the coefficients.

The last result of this section guarantees the existence of a meromorphic power series satisfying the functional equations. It is proven by a generalization of the averaging procedure developed by Chinta and Gunnells \cite{CG1}. A proof for the affine root system $\tilde{D}_4$ appears in Bucur-Diaconu \cite{BD}; Lee and Zhang \cite{LZ} show that the construction works in all symmetrizable Kac-Moody root systems. The notation of Lee and Zhang differs somewhat from ours; in particular, they construct the $p$-part $Z_p$ of a series--a change of variables is required to obtain the global series $Z$. For a proof in the notation of this paper, see \cite{W}.

We construct an infinite product which describes all the poles of $Z$ implied by the functional equations: let
\begin{equation}
\Delta(x_0, \ldots x_n)=\prod_{m=0}^{\infty} \prod_{\substack{i, j \mod n+1 \\ i\not\equiv j+1}} (1-q(qx_0^2\cdots qx_n^2)^m(qx_i^2\cdots qx_j^2)).
\end{equation}
$\Delta$ is best understood as a deformed Weyl denominator, a product over positive real roots in the $\tilde{A}_n$ root system. It converges for $|x_0\cdots x_n|<q^{-(n+1)/2}$. Then we have:

\begin{prop} \label{averaging}
There exists a power series $Z_{\text{avg}}(x_0, \ldots x_n)$ satisfying the functional equations \ref{oddfe} and \ref{evenfe}, such that $\Delta(x_0, \ldots x_n)Z_{\text{avg}}(x_0, \ldots x_n)$ has analytic continuation to $|x_0\cdots x_n|<q^{-(n+1)/2}$. 
\end{prop}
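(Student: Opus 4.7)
The plan is to adapt the Chinta--Gunnells averaging construction to the affine $\tilde{A}_n$ setting, following the strategy sketched by Lee--Zhang \cite{LZ} for general symmetrizable Kac--Moody root systems and translated into the notation of this paper in \cite{W}. First I would define an action of $W = \langle \sigma_0, \ldots, \sigma_n \rangle$ on $\C(x_0, \ldots, x_n)$ in which each generator acts by the geometric substitution $\sigma_i$ already defined, twisted by a rational multiplier chosen so that the functional equations \ref{oddfe} and \ref{evenfe} become the single identity $\sigma_i \cdot Z_{\text{avg}} = Z_{\text{avg}}$; concretely, the twist is $(q^{1/2}x_i)^{-1}$ on the parts with $a_{i-1}+a_{i+1}$ odd and $(1-x_i^{-1})/(1-qx_i)$ on the parts with $a_{i-1}+a_{i+1}$ even, the two pieces fitting together into a single rational operator as in Chinta--Gunnells. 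Checking that this extends to a genuine action of $W$---i.e.\ $\sigma_i^2=\mathrm{id}$, the braid relations $\sigma_i\sigma_{i\pm 1}\sigma_i = \sigma_{i\pm 1}\sigma_i\sigma_{i\pm 1}$, and commutation for $j\not\equiv i\pm 1$---is a rank two calculation that reduces to the known $A_2$ and $A_1\times A_1$ Chinta--Gunnells identities.

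Second, I would fix a seed rational function $f(x_0,\ldots,x_n)$ whose denominator is a product of simple-root factors of $\Delta$, built as a product of single-variable pieces matching the local functional equations in Proposition \ref{axiomsimplyfes}. Then I would set
\begin{equation*}
Z_{\text{avg}}(x_0, \ldots, x_n) = \sum_{w \in W} w \cdot f.
\end{equation*}
This is $W$-invariant by construction, hence satisfies \ref{oddfe} and \ref{evenfe}; its denominator lies in the $W$-orbit of the simple-root factors of $f$, which is precisely the set of factors of $\Delta$ indexed by positive real roots of $\tilde{A}_n$, so $\Delta \cdot Z_{\text{avg}}$ has no real-root poles.

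The main obstacle---and the genuinely new feature of the affine case---is convergence of this infinite sum. The Weyl group decomposes as $W = W_{\text{fin}} \ltimes Q^{\vee}$, where $Q^{\vee}$ is the finite coroot lattice of $A_n$, so $|W|=\infty$. I would analyze the action of a translation $\tau \in Q^{\vee}$ on a Laurent monomial: up to the finite Weyl factor, $\tau$ rescales by a power of $qx_0^2\cdots qx_n^2$ controlled by its length, the exponent coming from the pairing with the null root. Bookkeeping these exponents shows that on the region $|x_0\cdots x_n|<q^{-(n+1)/2}$ the individual summands $w\cdot f$ decay geometrically as $w$ runs to infinity in $Q^{\vee}$, which gives absolute and uniform convergence of $\sum_{w\in W} w\cdot f$ on compacts in that region. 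Multiplying through by $\Delta$ clears every real-root pole, leaving $\Delta \cdot Z_{\text{avg}}$ holomorphic on $|x_0\cdots x_n|<q^{-(n+1)/2}$; the imaginary-root poles demanded by the functional equations remain and accumulate on the boundary sphere $|x_0\cdots x_n|=q^{-(n+1)/2}$, in accordance with the natural boundary advertised in the introduction.
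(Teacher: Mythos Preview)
Your approach is exactly the one the paper invokes: it does not prove the proposition in the text but defers to the Chinta--Gunnells averaging in the affine/Kac--Moody setting as carried out in \cite{BD}, \cite{LZ}, and \cite{W}. Your outline---define the $W$-action on rational functions with the parity-dependent cocycle, verify the Coxeter relations via rank-two checks, average a seed over $W$, and handle convergence via the decomposition $W\cong W_{\text{fin}}\ltimes Q^{\vee}$ with geometric decay in $|qx_0^2\cdots qx_n^2|$---is precisely that construction.

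One point needs correction. Your final sentence says that after multiplying by $\Delta$, ``the imaginary-root poles demanded by the functional equations remain.'' This is backwards on two counts. First, the functional equations do \emph{not} force imaginary-root poles; the paper emphasizes exactly this (``their presence cannot be detected from functional equations''), and it is why $Z_{\text{avg}}$ fails the axioms while still satisfying \ref{oddfe} and \ref{evenfe}. Second, the averaged series acquires poles only along the $W$-orbit of the seed's denominator, i.e.\ only at real-root hyperplanes, so $\Delta\cdot Z_{\text{avg}}$ is genuinely holomorphic on $|x_0\cdots x_n|<q^{-(n+1)/2}$ with nothing left over. What accumulates at the boundary are the \emph{real}-root factors of $\Delta$ itself (the product over $m\to\infty$), not any imaginary-root contribution. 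Strike that sentence and the argument is clean.
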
 

The series $Z_{\text{avg}}$ does not satisfy our axioms, but it will be crucial in proving the meromorphic continuation of our series $Z$.

\section{Existence and Uniqueness}
To simplify computations leading to the proof of the main theorem, we restrict our consideration to a particular residue $R$ of the series $Z$. Such residues are essential when we apply Tauberian theorems to $Z$ to obtain analytic results. The series $Z$ can be recovered from the residue $R$, but $R$ is simpler than $Z$ because its coefficients are multiplicative, not twisted multiplicative; hence, $R$ has an Euler product. We will observe a symmetry in the Euler product formula which is equivalent to the local-to-global axiom. This, together with the dominance axiom, leads to an explicit formula for $R$. The meromorphic continuation of $R$ implies that of $Z$. 

We will have two separate cases: $n$ odd and $n$ even. The computation for $n$ odd is more straightforward; for $n$ even, the analogous results are somewhat more complicated, but not essentially different.

For $n$ odd, we define 
\begin{equation}
R(x_0, x_2, \ldots x_{n-1})=(-q)^{(n+1)/2} \text{Res}_{x_1=x_3=\cdots=x_n=q^{-1}} Z(x_0, \ldots x_n) 
\end{equation}
and for $n$ even,
\begin{equation}
R(x_0, x_2, \ldots x_n)=(-q)^{n/2} \text{Res}_{x_1=x_3=\cdots=x_{n-1}=q^{-1}} Z(q^{-1/4}x_0, x_1, \ldots x_{n-1}, q^{-1/4}x_n) 
\end{equation}
where the first and last variables are multiplied by $q^{-1/4}$ to simplify later formulas. At first, we treat this residue as a formal power series only. Taking a residue may not be a well-defined operation on arbitrary power series, but in this case it is. We may multiply $Z(x_0, \ldots x_n)$ by $1-qx_i$ and then evaluate at $x_i=q^{-1}$; by Proposition \ref{axiomsimplyfes} this involves taking only finite sums of coefficients, so it gives a well-defined series. This is the meaning of $-q\text{Res}_{x_i=q^{-1}} Z(x_0, \ldots x_n)$. Once we give an explicit formula for $R$ in the following chapter, we will see that it is a meromorphic function, and the residue of a meromorphic function $Z$, in the desired domain.

If $Z$ and $Z'$ are two series satisfying the functional equations \ref{oddfe} and \ref{evenfe}, with residues $R$ and $R'$, then by Proposition \ref{diagonal}, 
\begin{equation}
\frac{Z(x_0, \ldots x_n)}{Z'(x_0, \ldots x_n)}=Q(x_0\cdots x_n)
\end{equation}
a diagonal series. Therefore,
\begin{equation}
\frac{R(x_0, x_2,\ldots x_{2\lfloor n/2 \rfloor})}{R'(x_0, x_2,\ldots x_{2\lfloor n/2 \rfloor})}=Q(q^{-(n+1)/2}x_0x_2\cdots x_{2\lfloor n/2 \rfloor})
\end{equation}
And the same holds if we only take the diagonal parts of the power series: given $Z(x_0, \ldots x_n)=\sum_{a_0,\ldots a_n}c_{a_0, \ldots a_n}(q)x_0^{a_0}\cdots x_n^{a_n}$, we define $Z_{\text{diag}}(x)=\sum_a c_{a, \ldots a}(q) x^a$, and similarly for $Z'_{\text{diag}}(x)$, $R_{\text{diag}}(x)$, and $R'_{\text{diag}}(x)$. Then 
\begin{equation}
\frac{Z_{\text{diag}}(x)}{Z'_{\text{diag}}(x)}=\frac{R_{\text{diag}}(q^{(n+1)/2}x)}{R'_{\text{diag}}(q^{(n+1)/2}x)}=Q(x).
\end{equation}
Equivalently, the ratio 
\begin{equation}
\frac{R_{\text{diag}}(x)}{Z_{\text{diag}}(q^{-(n+1)/2}x)}=:P(x)
\end{equation}
is the same for all series satisfying the functional equations \ref{oddfe} and \ref{evenfe}. It does not depend on the choice of diagonal coefficients $c_{a, \ldots a}(q)$. $P(x)$ can be thought of as the diagonal part of the residue if we take $c_{0,\ldots 0}(q)=1$ and $c_{a,\ldots a}(q)=0$ for all $a>0$. 

Thus we can recover a full series $Z(x_0, \ldots x_n)$ satisfying the functional equations from its residue $R$, or even from $R_{\text{diag}}$. 

Next, we prove two formulas for the coefficients of $R$ in terms of the coefficients of $Z$:
\begin{prop} \label{residueformula1}
Suppose the coefficients of $Z(x_0, \ldots x_n)$ are $c_{a_0, \ldots a_n}(q)$. Then for $n$ odd, the coefficient of $R(x_0, x_2, \ldots x_{n-1})$ at $x_0^{a_0}x_2^{a_2}\cdots x_{n-1}^{a_{n-1}}$ is 
\begin{equation}
q^{-2(a_0+a_2+\cdots +a_{n-1})} c_{a_0, a_0+a_2, a_2, a_2+a_4, \ldots a_{n-1}, a_{n-1}+a_0}(q).
\end{equation}
For $n$ even, the coefficient at $x_0^{a_0}x_2^{a_2}\cdots x_{n}^{a_{n}}$ is 
\begin{equation}
q^{3(a_0+a_n)/4-2(a_0+a_2+\cdots+a_n)}c_{a_0, a_0+a_2, a_2, a_2+a_4, \ldots a_{n-2}+a_n, a_n}(q).
\end{equation}
In particular, the nonzero coefficients of the residue must have all $a_i$ odd or all $a_i$ even.
\end{prop}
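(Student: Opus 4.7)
The plan is to reduce the entire iterated-residue computation to a single one-variable calculation that can be read directly off Proposition \ref{axiomsimplyfes}. Since the operators $-q\,\text{Res}_{x_i = q^{-1}}$ for distinct odd $i$ act on independent variables they commute, so I can apply them one index at a time, in any order. The $(-q)^{(n+1)/2}$ (resp.\ $(-q)^{n/2}$) prefactor in the definition of $R$ is exactly what converts each residue into the operation $f \mapsto (1-qx_i)f|_{x_i = q^{-1}}$---that is, "clear the pole, then evaluate"---as the paper already notes.

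The one-variable step runs as follows. Fix all $a_j$ for $j \neq i$ and consider the slice $\Lambda_{a_0,\dots,\hat{a_i},\dots,a_n}(x_i) = \sum_{a_i \ge 0} c_{\dots,a_i,\dots}(q)\, x_i^{a_i}$ of $Z$. If $a_{i-1}+a_{i+1}$ is odd, Proposition \ref{axiomsimplyfes} says this slice is a polynomial, so the factor $(1-qx_i)$ forces the value at $x_i = q^{-1}$ to vanish. If $a_{i-1}+a_{i+1} = k$ is even, then $(1-qx_i)\Lambda(x_i)$ is a polynomial of degree $k$ whose $x_i^{a_i}$-coefficient is $n_{a_i} = c_{\dots,a_i,\dots}(q) - q\,c_{\dots,a_i-1,\dots}(q)$, and $n_{a_i} = 0$ for $a_i > k$. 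Specializing at $x_i = q^{-1}$ and collapsing the geometric-style sum by telescoping (using $c_{\dots,-1,\dots} = 0$) gives
\begin{equation*}
(1-qx_i)\Lambda(x_i)\big|_{x_i = q^{-1}} \;=\; \sum_{a_i \ge 0} n_{a_i}\, q^{-a_i} \;=\; q^{-k}\, c_{\dots,k,\dots}(q),
\end{equation*}
which is precisely the substitution $a_i \mapsto a_{i-1}+a_{i+1}$ with its $q^{-(a_{i-1}+a_{i+1})}$ weight.

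To iterate, I note that the recurrences \ref{oddrecurrence}-\ref{evenrecurrence} along an axis $j \neq i$ do not involve the index $a_i$; substituting $a_i = a_{i-1}+a_{i+1}$ therefore preserves the structural hypotheses of Proposition \ref{axiomsimplyfes} in every direction not yet residued, so the one-variable step can be reapplied. Running this for $i = 1, 3, 5, \dots$ produces the asserted $c$-index pattern $(a_0, a_0+a_2, a_2, a_2+a_4, a_4,\dots)$. For $n$ odd the iteration ends at $i = n$, whose cyclic neighbors are $a_{n-1}$ and $a_0$, forcing the wrap-around entry $a_{n-1}+a_0$; for $n$ even it ends at $i = n-1$ without any wrap. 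The total $q$-exponent is $-\sum_{i \text{ odd}}(a_{i-1}+a_{i+1})$, which simplifies to $-2(a_0+a_2+\cdots+a_{n-1})$ in the odd case and $-a_0 - 2(a_2+\cdots+a_{n-2}) - a_n$ in the even case. For $n$ even, the outer rescaling $x_0 \mapsto q^{-1/4}x_0$, $x_n \mapsto q^{-1/4}x_n$ contributes an extra $-(a_0+a_n)/4$, yielding the stated exponent $3(a_0+a_n)/4 - 2(a_0+a_2+\cdots+a_n)$.

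The parity assertion is a by-product: nonvanishing at a given multi-index requires $a_{i-1}+a_{i+1}$ to be even at every odd $i$ in the iteration, forcing all surviving $a_j$ to share a common parity. The only real obstacle is bookkeeping---tracking the $q$-exponents through the iteration, handling the cyclic wrap for $n$ odd, and carrying the outer $q^{-1/4}$ rescaling through for $n$ even---not anything conceptually deep. The genuine content of the argument is the one-line telescoping identity above, which converts the formal residue into a direct identification of coefficients of $Z$.
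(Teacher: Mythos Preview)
Your proof is correct and follows essentially the same route as the paper's: reduce to the one-variable residue via Proposition~\ref{axiomsimplyfes}, show that $-q\,\text{Res}_{x_i=q^{-1}}\Lambda_{\ldots \hat{a_i}\ldots}(x_i)=q^{-(a_{i-1}+a_{i+1})}c_{\ldots a_{i-1}+a_{i+1}\ldots}(q)$, then iterate over odd $i$. The only stylistic difference is that the paper writes the $a_{i-1}+a_{i+1}$ even case as an explicit polynomial-plus-geometric-tail decomposition and reads off the residue, whereas you compute $(1-qx_i)\Lambda(x_i)|_{x_i=q^{-1}}$ by a telescoping sum; these are the same calculation, and your exponent bookkeeping for both parities of $n$ is correct.
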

\begin{proof}
Fix all indices except for one $a_i$ with $i$ odd. Then by Proposition \ref{axiomsimplyfes}, we have 
\begin{align}
&\Lambda_{a_0, \ldots \hat{a_i}, \ldots a_n}(x_i) \nonumber \\
&= \sum_{a_i=0}^{a_{i-1}+a_{i+1}-1} c_{\ldots a_{i-1}, a_i, a_{i+1} \ldots}(q) x_i^{a_i} + \frac{c_{\ldots a_{i-1}, a_{i-1}+a_{i+1}, a_{i+1} \ldots}(q) x_i^{a_{i-1}+a_{i+1}}}{1-qx_i}
\end{align}
and $c_{\ldots a_{i-1}, a_{i-1}+a_{i+1}, a_{i+1} \ldots}(q)=0$ if $a_{i-1}+a_{i+1}$ is odd. 
Taking $-q \text{Res}_{x_i=q^{-1}}$ gives $q^{-a_{i-1}-a_{i+1}}c_{\ldots a_{i-1}, a_{i-1}+a_{i+1}, a_{i+1} \ldots}(q)$. If we repeat this process for all $i\leq n$ odd, we obtain the desired formula. The rearrangements of power series implicit in this computation are only reorderings of finite sums, by Proposition \ref{axiomsimplyfes}.
\end{proof}

We may apply the functional equations $\sigma_i$ for $i \leq n$ odd to the residue. Since only terms with $a_{i-1}+a_{i+1}$ even contribute to the residue, we only need the even part of the functional equation. Let $Z_\text{same}$ denote the part of the power series $Z$ with $a_0+a_2$, $a_2+a_4$, $a_4+a_6$, etc. all even, i.e. where $a_0$, $a_2$, $a_4$, etc. have the same parity. In the case of $n$ odd, we obtain:
\begin{align} \label{residuevalue1}
R(x_0, x_2, \ldots x_{n-1}) &=(-q)^{(n+1)/2} \text{Res}_{x_1=x_3=\cdots=x_n=q^{-1}} Z(x_0, \ldots x_n) \nonumber \\
&=(-q)^{(n+1)/2} \text{Res}_{x_1=x_3=\cdots=x_n=q^{-1}} Z_{\text{same}}(x_0, \ldots x_n) \nonumber \\
&=(1-q)^{(n+1)/2}Z_{\text{same}}(q^{-1}x_0,1, q^{-1}x_2, 1, \ldots q^{-1}x_{n-1}, 1)
\end{align}
and for $n$ even:
\begin{equation} \label{residuevalue2}
R(x_0, x_2, \ldots x_n)=(1-q)^{n/2}Z_{\text{same}}(q^{-3/4} x_0,1, q^{-1}x_2, 1, \ldots q^{-1}x_{n-2}, 1, q^{-3/4}x_n).
\end{equation}
We can now prove a second formula for the coefficients of $R$ in terms of the local coefficients $H(f_0, \ldots f_n)$ rather than the global coefficients $c_{a_0, \ldots a_n}(q)$:

\begin{prop} \label{residueformula2}
If $Z$ is a series defined by local weights $H$ as:
\begin{equation}
Z(x_0, \ldots x_n)=\sum_{f_0, \ldots f_n} H(f_0, \ldots f_n) x_0^{\deg f_0}\cdots x_n^{\deg f_n}
\end{equation}
then for $n$ odd we have:
\begin{align}
&R(x_0, x_2, \ldots x_{n-1}) \nonumber \\
&=\sum_{f_0, f_2, \ldots f_{n-1}} \frac{H(f_0, f_0f_2, f_2, f_2f_4,\ldots f_{n-1}, f_{n-1}f_0)}{q^{\deg f_0+\deg f_2 + \cdots +\deg f_{n-1}}} x_0^{\deg f_0}x_2^{\deg f_2}\cdots x_{n-1}^{\deg f_{n-1}}
\end{align}
and for $n$ even:
\begin{align}
&R(x_0, x_2, \ldots x_n) \nonumber \\
&=\sum_{f_0, f_2, \ldots f_n} \frac{H(f_0, f_0f_2, f_2, f_2f_4,\ldots f_{n-2}f_n, f_n)}{q^{\frac{3}{4}\deg f_0+\deg f_2 + \cdots +\deg f_{n-2}+\frac{3}{4}\deg f_n}} x_0^{\deg f_0}x_2^{\deg f_2}\cdots x_n^{\deg f_n}.
\end{align}
In particular, only tuples of polynomials $f_0, f_2, f_4,\ldots$ with $f_i f_{i+2}$ a perfect square for all $i$, i.e. tuples of polynomials with the same squarefree part, contribute to the residue. 
\end{prop}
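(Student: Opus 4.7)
The plan is to match coefficients on both sides of the formula using Proposition \ref{residueformula1}, thereby reducing the claim to an explicit identity at the level of local weights. For $n$ odd, extracting the coefficient of $x_0^{a_0}\cdots x_{n-1}^{a_{n-1}}$ from both the left-hand side (via Proposition \ref{residueformula1}) and the right-hand side (from the proposed formula), and expanding $c_{a_0,a_0+a_2,\ldots}(q)$ as a sum of $H$-values via its definition, the proposition reduces to the following identity: for each fixed monic tuple $(f_0,f_2,\ldots,f_{n-1})$ of degrees $(a_0,a_2,\ldots,a_{n-1})$,
\begin{equation*}
\sum_{\substack{f_1,f_3,\ldots,f_n\text{ monic}\\ \deg f_{2k+1}=\deg f_{2k}+\deg f_{2k+2}}}H(f_0,f_1,\ldots,f_n)=q^{a_0+\cdots+a_{n-1}}H(f_0,f_0f_2,f_2,\ldots,f_{n-1},f_{n-1}f_0).
\end{equation*}

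To establish this identity, I would apply Axiom \ref{twistedmult} iteratively, decomposing each odd-indexed $f_j=h_jg_j$ with $h_j$ supported on primes of $\mathcal{F}=f_0f_2\cdots f_{n-1}$ and $g_j$ coprime to $\mathcal{F}$. Twisted multiplicativity, combined with Axiom \ref{initialconditions}, factors $H(f_0,h_1g_1,f_2,\ldots,h_ng_n)$ as $H(f_0,h_1,f_2,\ldots,h_n)$ times a product $\prod_{j\text{ odd}}\res{f_{j-1}f_{j+1}}{g_j}$; the symmetric form follows by applying the reciprocity law for $\F_q[t]$ with $q\equiv 1\bmod 4$ to the cross-twists. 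The sum then separates into an $h$-sum against a character sum over the $g_j$'s. The character sums, combined with the functional equations of the Dirichlet L-functions $L(x,\chi_{f_{j-1}f_{j+1}})$ and the degree constraint $\deg h_j+\deg g_j=\deg f_{j-1}+\deg f_{j+1}$, pin the surviving contribution to $g_j=1$ and $h_j=f_{j-1}f_{j+1}$, producing the factor $q^{a_0+\cdots+a_{n-1}}$ and the distinguished $H$-value. The final statement about tuples with the same squarefree part follows since the character $\res{f_{j-1}f_{j+1}}{\cdot}$ is trivial precisely when $f_{j-1}f_{j+1}$ is a perfect square, so non-matching squarefree parts yield vanishing contribution.

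The $n$ even case is handled analogously starting from formula \ref{residuevalue2}: the $q^{-1/4}$ absorbed into $x_0$ and $x_n$ matches the modified $q^{3(a_0+a_n)/4-2(\cdots)}$ factor dictated by Proposition \ref{residueformula1}, and the twisted-multiplicativity decomposition proceeds identically. The main obstacle is the combinatorial bookkeeping from the cyclic $\tilde{A}_n$ structure, which couples all odd-indexed $g_j$'s through the twisted-multiplicativity cocycle; cases where distinct $g_j$'s share primes outside $\mathcal{F}$ require careful handling since Axiom \ref{twistedmult} only separates coprime products. A cleaner route may be to prove the identity first for the $p$-part $Z_p$, where the calculation reduces to a finite identity at a single prime, and then globalize via the Euler product structure implicit in twisted multiplicativity.
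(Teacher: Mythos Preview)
There is a genuine gap. Your central claim---that after the decomposition $f_j=h_jg_j$ the character sums ``pin the surviving contribution to $g_j=1$ and $h_j=f_{j-1}f_{j+1}$''---is false. When $f_{j-1}f_{j+1}$ is already a perfect square (the only case that ultimately contributes), the character $\res{f_{j-1}f_{j+1}}{\,\cdot\,}$ is \emph{trivial}, so every $g_j$ coprime to $\mathcal{F}$ contributes equally; nothing forces $g_j=1$. Concretely, for $n=3$ with $f_0=f_2=p$, your sum becomes $\sum_{b_1,b_3} H(p,p^{b_1},p,p^{b_3})\,N(2-b_1)N(2-b_3)$ where $N(d)$ counts degree-$d$ monics coprime to $p$; this is a genuine weighted sum over all $(b_1,b_3)$, not a single term. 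The per-tuple identity you wrote down is correct, but it does not follow from the mechanism you describe.

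The paper avoids this difficulty by \emph{not} matching coefficients through Proposition~\ref{residueformula1}. Instead it first applies the $\sigma_i$ functional equations to rewrite the residue as an \emph{evaluation}: $R=(1-q)^{(n+1)/2}Z_{\text{same}}(q^{-1}x_0,1,q^{-1}x_2,1,\ldots)$ (equation~\ref{residuevalue1}). For fixed even-indexed $f$'s, the inner sum over $f_i$ is then the single-variable series $L_{f_0,\ldots,\hat{f_i},\ldots}(x_i)$ evaluated at $x_i=1$. The Euler-product comparison with $L(x_i,\chi_g)$ from Proposition~\ref{axiomsimplyfes} now gives a clean dichotomy: if $f_{i-1}f_{i+1}$ is not a perfect square the L-function has a trivial zero at $x_i=1$; if it is a perfect square the series is $\zeta(x_i)$ times a correction polynomial, and $(1-q)$ times the value at $x_i=1$ collapses each local correction factor to the single top term $H(\ldots,p^{v_p(f_{i-1}f_{i+1})},\ldots)$ because the factor $(1-x_i^{\deg p})$ annihilates the lower terms. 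Iterating over odd $i$ yields $H(f_0,f_0f_2,f_2,\ldots)$ directly. The point is that evaluation at $x_i=1$ exploits the pole/zero structure of the L-function, whereas extracting the coefficient at the specific degree $a_{j-1}+a_{j+1}$ (your route) does not; the latter would require computing a nontrivial convolution of the correction polynomial against the zeta function, which is exactly the sum over all $(h_j,g_j)$ you were hoping to avoid.
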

\begin{proof}
First note that $H(f_0, f_0f_2, f_2, f_2f_4,\ldots)$ indeed vanishes if any $f_if_{i+2}$ is not a perfect square. If a prime $p$ divides $f_if_{i+2}$ an odd number of times, then $H(p^{v_p(f_0)}, p^{v_p(f_0f_2)}, p^{v_p(f_2)}, p^{v_p(f_2f_4)},\ldots)$ must vanish by the local functional equations. Then, by Axiom \ref{twistedmult}, $H(f_0, f_0f_2, f_2, f_2f_4,\ldots)=0$.

Now we use equations \ref{residuevalue1} and \ref{residuevalue2} as a starting point. Fix all but one $f_i$ and assume $\deg f_{i-1}f_{i+1}$ is even. Then, as in the proof of Proposition \ref{axiomsimplyfes}, the series $\sum_{f_i} H(f_0, \ldots f_n)x_i^{\deg f_i}$ matches the L-function $L(x_i, \chi_{f_{i-1}f_{i+1}})$ up to multiplication by correction polynomials. Unless $f_{i-1}f_{i+1}$ is a perfect square, this L-function has a trivial zero at $x_i=1$. If $f_{i-1}f_{i+1}$ is a perfect square, then this series matches the zeta function $(1-qx_i)^{-1}$ up to multiplication by correction polynomials of the form 
\begin{align}
(1-x_i^{\deg p})\left(\sum_{a_i=0}^{v_p(f_{i-1}f_{i+1})-1} H(\ldots p^{v_p(f_{i-1})},p^{a_i}, p^{v_p(f_{i+1})}, \ldots) x_i^{a_i\deg p}\right) \nonumber\\
+ H(\ldots p^{v_p(f_{i-1})}, p^{v_p(f_{i-1}f_{i+1})}, p^{v_p(f_{i+1})}, \ldots) x_i^{v_p(f_{i-1}f_{i+1})\deg p}
\end{align}
for each prime $p$ dividing $f_{i-1}f_{i+1}$. Thus, evaluating the series $\sum_{f_i} H(f_0, \ldots f_n)x_i^{\deg f_i}$ at $x_i=1$ and multiplying by $1-q$ gives $H(\ldots f_{i-1}, f_{i-1}f_{i+1}, f_{i+1},\ldots)$. Repeating this process for all $i\leq n$ odd and using equations \ref{residuevalue1} and \ref{residuevalue2} gives the desired result. Again by Proposition \ref{axiomsimplyfes}, the rearrangements of power series implicit in this proof are only reorderings of finite sums. 
\end{proof}

Notice that the terms $H(f_0, f_0f_2, f_2, f_2f_4,\ldots)$ appearing in the residue are multiplicative, not twisted multiplicative. That is, if $\gcd(f_0f_2\cdots f_{2\lfloor n/2\rfloor}, g_0g_2\cdots g_{2\lfloor n/2\rfloor})=1$, then 
\begin{equation}
H(f_0g_0, f_0f_2g_0g_2, f_2g_2, f_2f_4g_2g_4,\ldots)=H(f_0, f_0f_2, f_2, f_2f_4,\ldots)H(g_0, g_0g_2, g_2, g_2g_4,\ldots).
\end{equation}
Indeed, since all the $f_if_{i+2}$ and $g_ig_{i+2}$ terms are square, they do not contribute to the twists in Axiom \ref{twistedmult}. The only possible factor of $-1$ comes from $\res{f_0}{g_n} \res{f_n}{g_0}$ in the case when $n$ is even, but since $f_0$ and $f_n$ must have the same squarefree part, and so must $g_0$ and $g_n$, this product of residues is $1$.

Therefore, we may write an Euler product expression for $R$:
\begin{equation}
R(x_0, x_2, \ldots x_{2\lfloor n/2 \rfloor})=\prod_{p \text{ prime}} R_p(x_0, x_2, \ldots x_{2\lfloor n/2 \rfloor})
\end{equation}
where if $n$ is odd,
\begin{align} 
&R_p(x_0, x_2, \ldots x_{n-1}) \nonumber\\ & \sum_{a_0, a_2, \ldots a_{n-1}} \frac{H(p^{a_0}, p^{a_0+a_2}, p^{a_2}, p^{a_2+a_4}, \ldots p^{a_{n-1}}, p^{a_{n-1}+a_0})}{q^{(a_0+a_2+\cdots+a_{n-1})\deg p}} x_0^{a_0\deg p} x_2^{a_2\deg p}\cdots x_{n-1}^{a_{n-1}\deg p}
\end{align}
and if $n$ is even, 
\begin{align} 
&R_p(x_0, x_2, \ldots x_n) \nonumber\\ &= \sum_{a_0, a_2, \ldots a_n} \frac{H(p^{a_0}, p^{a_0+a_2}, p^{a_2}, p^{a_2+a_4}, \ldots p^{a_{n-2}+a_n}, p^{a_n})}{q^{(\frac{3}{4}a_0+a_2+\cdots+a_{n-2}+\frac{3}{4}a_n)\deg p}} x_0^{a_0\deg p} x_2^{a_2\deg p}\cdots x_n^{a_n\deg p}.
\end{align}
Let us compare these equations to Proposition \ref{residueformula1}, and apply Axiom \ref{localglobal}. We see that $R_p$ may be obtained from $R$ by substituting $q\mapsto q^{-\deg p}$ and $x_i\mapsto x_i^{\deg p}$.

We may write $R(x_0, x_2, \ldots)$ as a product of terms $(1-q^{\beta}x_0^{\alpha_0}x_2^{\alpha_2}\cdots )^{-\gamma}$, where $\alpha_i\in \Z$, and $\beta\in \Z$ for $n$ odd, or $\frac{1}{2}\Z$ for $n$ even (not $\frac{1}{4}\Z$ because $\alpha_0, \alpha_n$ must have the same parity). Any formal power series in can be expressed uniquely in this way. Then comparing to the Euler product formula for the function field zeta function, we conclude that such factors come in pairs.

\begin{property} \label{symmetry} If $(1-q^{\beta}x_0^{\alpha_0}x_2^{\alpha_2}\cdots )^{-\gamma}$ is a factor of $R$, then so is $(1-q^{1-\beta}x_0^{\alpha_0}x_2^{\alpha_2}\cdots )^{-\gamma}$.
\end{property}

This symmetry is equivalent to Axiom \ref{localglobal} for the full series $Z$: it implies the local-to-global property for coefficients $c_{a_0, a_0+a_2, a_2, a_2+a_4,\ldots}(q)$ and local weights $H(f_0, f_0f_2, f_2, f_2f_4, \ldots)$, and all other coefficients can be obtained from these via compatible global and local functional equations.  

We are now ready to prove the existence and uniqueness statements of the main theorem. We first introduce several new notations based on the expression of $R$ as a product of terms $(1-q^{\beta}x_0^{\alpha_0}x_2^{\alpha_2}\cdots )^{-\gamma}$. We let $R^{\flat}$ denote the product of terms with $\beta \leq 0$ and $R^{\sharp}$ denote the product of terms with $\beta \geq 1$. If $n$ is even, we also have $R^{\natural}$ with $\beta=\frac{1}{2}$. We let $R_1$ denote the product of diagonal factors (with $\alpha_0=\alpha_2=\alpha_4=\cdots$), and let $R_0$ denote the product of off-diagonal factors. In the following chapter, we will explicitly compute $R_0$, and show that it satisfies property \ref{symmetry}. For now, we assume this.  We also have $R_0^{\flat}$, $R_0^{\natural}$, $R_0^{\sharp}$, $R_1^{\flat}$, $R_1^{\natural}$, $R_1^{\sharp}$, and the diagonal parts of each of these. 

Recall that 
\begin{equation}
P(x)Z_{\text{diag}}(q^{-(n+1)/2}x)=R_{\text{diag}}(x)=R_{0, \text{ diag}}(x)R_1(x) \label{diagonaltoresidue}
\end{equation}
for any series $Z$ satisfying the functional equations with residue $R$. We must show that there exists a unique choice of $Z_{\text{diag}}$ satisfying Axiom \ref{dominance} and $R_1$ satisfying Property \ref{symmetry} which make this equation true. The resulting series $Z$ with residue $R$ will satisfy the four axioms.

For $Z_{\text{diag}}(q^{-(n+1)/2}x)$ to satisfy the dominance axiom, its coefficients (other than the constant coefficient $1$) must be polynomials divisible by $q$. If it is written as a product of terms $(1-q^{\beta}x^{\alpha})^{-\gamma}$, then all these terms will have $\beta \geq 1$. Thus, when $n$ is odd, 
\begin{align} \label{flat}
R_1^{\flat}(x)&=(P(x)Z_{\text{diag}}(q^{-(n+1)/2}x)R_{0, \text{ diag}}(x)^{-1})^{\flat} \nonumber \\
&=(P(x)R_{0, \text{ diag}}(x)^{-1})^{\flat}
\end{align}
and both $P(x)$ and $R_{0, \text{ diag}}(x)^{-1}$ are fixed. Hence, $R_1^{\flat}$ is uniquely determined. Then we must choose $R_1^{\sharp}$ to satisfy Property \ref{symmetry}, and $Z_{\text{diag}}$ to satisfy equation \ref{diagonaltoresidue}. In the case of $n$ even, $R_1^{\flat}R_1^{\natural}$ is uniquely determined, and the conclusion is the same as before. 

\section{Computing the Residue}

In this chapter, we prove explicit formulas for the residue.

\begin{prop} \label{Rformula}
If the series $Z(x_0, \ldots x_n)$ satisfies Axioms \ref{twistedmult}-\ref{initialconditions}, and $n$ is odd, then the residue $R(x_0, x_2, x_{n-1})$ is as follows:
\begin{align}\label{residueodd}
&R(x_0, x_2, \ldots x_{n-1}):=(-q)^{(n+1)/2} \text{Res}_{x_1=x_3=\cdots=x_n=q^{-1}} Z(x_0, \ldots x_n) \nonumber \\ 
&=\prod_{m=0}^{\infty} (1-(x_0 x_2\cdots x_{n-1})^{2m+1})^{-1}(1-q (x_0 x_2\cdots x_{n-1})^{2m+1})^{-1} \nonumber\\ 
& \qquad \prod_{\substack{i,j \text{ mod } n+1 \\ \text{even}}} (1-(x_0 x_2\cdots x_{n-1})^{2m}(x_i x_{i+2}\cdots x_j)^2)^{-1} \nonumber \\
&\hspace{6em} (1-q (x_0 x_2\cdots x_{n-1})^{2m}(x_i x_{i+2}\cdots x_j)^2)^{-1}
\end{align}
and if $n$ is even, then:
\begin{align} \label{residueeven}
&R(x_0, x_2, \ldots x_n):=(-q)^{(n+1)/2} \text{Res}_{x_1=x_3=\cdots=x_n=q^{-1}} Z(q^{-1/4}x_0, x_1, \ldots x_{n-1}, q^{-1/4}x_n) \nonumber \\
&=\prod_{m=0}^{\infty} (1-(x_0x_2\cdots x_n)^{2m+2})^{-n/2}(1-q(x_0x_2\cdots x_n)^{2m+2})^{-n/2} \nonumber \\
& \qquad \quad \prod_{\substack{0\leq i<n \\ \text{even}}} (1-q^{1/2}(x_0x_2\cdots x_n)^{2m}(x_0x_2\cdots x_i)^2)^{-1} \nonumber \\
&\hspace{6em} (1-q^{1/2}(x_0x_2\cdots x_n)^{2m}(x_{i+2}x_{i+4}\cdots x_n)^2)^{-1} \nonumber \\
&\qquad \quad \prod_{\substack{0< i\leq j <n \\ \text{even}}} (1-(x_0x_2\cdots x_n)^{2m}(x_ix_{i+2}\cdots x_j)^2)^{-1} \nonumber \\
&\hspace{6em} (1-q(x_0x_2\cdots x_n)^{2m}(x_ix_{i+2}\cdots x_j)^2)^{-1} \nonumber \\
&\hspace{6em} (1-(x_0x_2\cdots x_n)^{2m}(x_0x_2\cdots x_{i-2} x_{j+2} x_{j+4}\cdots x_n)^2)^{-1} \nonumber \\ 
&\hspace{6em} (1-q(x_0x_2\cdots x_n)^{2m}(x_0x_2\cdots x_{i-2} x_{j+2} x_{j+4}\cdots x_n)^2)^{-1}.
\end{align}
\end{prop}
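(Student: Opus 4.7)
The plan is to prove the formula by exploiting the characterization of $R$ established in Section 4: the residue is uniquely determined by the functional equations it inherits from $Z$, the Euler-product symmetry of Property \ref{symmetry}, and the dominance condition on the diagonal coefficients through equation \ref{diagonaltoresidue}. I will split $R$ as the product $R_0 \cdot R_1$ of off-diagonal and diagonal factors and compute each piece.

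First I would catalogue the symmetries of $R$. The odd-indexed reflections $\sigma_{2k+1}$ are the ones being residued at $x_{2k+1} = q^{-1}$, while the even-indexed reflections $\sigma_{2k}$ descend to honest symmetries of $R(x_0, x_2, \ldots)$ obtained by specializing equation \ref{evenfe}. These descended symmetries generate a reflection group under which the claimed real-root factors (the products indexed by $(i,j)$ in \ref{residueodd} and \ref{residueeven}) are organized into orbits, and Property \ref{symmetry} accounts for the pairing $(1-q^\beta\cdots)(1-q^{1-\beta}\cdots)$ visible on each line of the formula. Verifying invariance of the ansatz under the descended $\sigma_{2k}$ is a bookkeeping exercise on exponents of the monomials $x_i x_{i+2}\cdots x_j$.

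Next I would compute the off-diagonal part $R_0$ via the averaging construction of Proposition \ref{averaging}. Since $Z_\text{avg}$ satisfies the functional equations, its residue $R_\text{avg}$ agrees with $R$ up to a diagonal factor by Proposition \ref{diagonal}, so $R_{\text{avg},0} = R_0$. The deformed Weyl denominator $\Delta(x_0, \ldots, x_n)$ lists the real-root poles of $Z_\text{avg}$; the positive real roots that survive the specialization $x_{2k+1}=q^{-1}$ correspond bijectively to the pairs $(i,j)$ in the product formula, and a direct residue calculation on $\Delta\cdot Z_\text{avg}$ isolates the claimed factors. For the diagonal part $R_1$, I would use equation \ref{flat}, $R_1^\flat(x) = (P(x) R_{0,\text{diag}}(x)^{-1})^\flat$: with $R_0$ in hand one obtains $R_{0,\text{diag}}(x)$ by setting all even-indexed variables equal, $P(x)$ is computed from the diagonal of $R_\text{avg}$, and extracting the $\beta \le 0$ part (or $\beta \le 1/2$ when $n$ is even) determines $R_1^\flat$; Property \ref{symmetry} then produces $R_1^\sharp$ by the reflection $\beta \mapsto 1-\beta$. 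Reassembly gives the claimed formulas \ref{residueodd} and \ref{residueeven}.

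The main obstacle will be the explicit diagonal evaluation of $Z_\text{avg}$ needed to pin down $P(x)$. The averaging construction is an infinite sum over the affine Weyl group of $\tilde{A}_n$, and isolating the diagonal directions $x_0 = x_2 = \cdots$ forces one to confront the imaginary-root contributions that do not show up for finite root systems. I expect to need a Kac-Moody denominator-type identity, or a direct analysis of the imaginary-root poles of $Z_\text{avg}$, to obtain a closed form for the diagonal generating series; this is precisely the step that the author flags as not generalizing easily to other affine types, and I anticipate it to be the technical heart of the argument.
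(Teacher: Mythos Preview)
Your high-level outline (split $R=R_0R_1$, pin down $R_0$ by symmetries, then recover $R_1$ from equation \ref{flat} and Property \ref{symmetry}) matches the paper's architecture, but the concrete mechanisms you propose for both pieces do not work as stated.

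For $R_0$: the claim that ``the even-indexed reflections $\sigma_{2k}$ descend to honest symmetries of $R$'' is false. A single $\sigma_{2k}$ sends $x_{2k\pm1}\mapsto q^{1/2}x_{2k}x_{2k\pm1}$, so it moves the residue locus $x_{2k\pm1}=q^{-1}$. The paper instead uses the length-four word $\sigma_i\sigma_{i-1}\sigma_{i+1}\sigma_i$ (for $i$ even), which does preserve the locus up to signs; after a careful cocycle computation this yields a \emph{scalar} functional equation $R(\ldots,x_i,\ldots)=(*)\,R(\ldots,x_{i-2}x_i,\,x_i^{-1},\,x_ix_{i+2},\ldots)$. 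For $n$ even one also needs the words $(\sigma_0\cdots\sigma_n)^2$ and $\sigma_0\sigma_1\sigma_n\sigma_{n-1}\sigma_0\sigma_n\sigma_1\sigma_0$. One then checks the ansatz satisfies these and that they determine $R$ up to a diagonal. Your alternative---compute $R_0$ from $R_{\text{avg}}$ via $\Delta$---is not executable: Proposition \ref{averaging} gives only existence and a bound on poles, not an explicit formula for $Z_{\text{avg}}$ or its residue.

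For $R_1$: the same obstruction bites harder. You propose to obtain $P(x)$ from the diagonal of $Z_{\text{avg}}$, but no closed form for $Z_{\text{avg}}$ is available, and the hoped-for ``Kac-Moody denominator-type identity'' is not supplied. The paper avoids $Z_{\text{avg}}$ entirely here: it computes $P(x)\bmod q$ directly from the recurrences \ref{oddrecurrence}--\ref{evenrecurrence}, observing that only the lowest power of $q$ matters for $R_1^\flat$. The simplified recurrences reduce the constant term of $p_a(q)$ to counting chains of indices $(a_i^{(j)})$; the change of variables $\delta_i^{(j)}=a_{i+j-1}^{(j-1)}-a_{i+j-2}^{(j)}$ converts this into counting tuples of integer partitions, whose generating function is exactly the diagonal of the target product. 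This combinatorial step (Proposition \ref{oddcombinatorics} and its even-$n$ analogue) is the technical heart of the argument and is absent from your plan.
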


These products define a meromorphic functions in the domain $|x_0x_2\cdots x_{2\lfloor n/2 \rfloor}|<1$. By Proposition \ref{averaging} we have a series $Z_{\text{avg}}$, satisfying the same functional equations as $Z$, with meromorphic continuation to $|x_0x_1\cdots x_n|<q^{-(n+1)/2}$. Its residue $R_{\text{avg}}$ is meromorphic in the same domain as $R$. The ratio 
\begin{equation}
\frac{R(x_0, x_2, \ldots x_{2\lfloor n/2 \rfloor})}{R_{\text{avg}}(x_0, x_2, \ldots x_{2\lfloor n/2 \rfloor})}=Q(q^{-(n+1)/2}x_0x_2\cdots x_{2\lfloor n/2 \rfloor})
\end{equation}
is a power series in one variable $x_0x_2\cdots x_{2\lfloor n/2 \rfloor}$, and $Q(x)$ is meromorphic for $|x|<q^{-(n+1)/2}$. Thus 
\begin{equation}
Z(x_0,\ldots x_n)=Z_{\text{avg}}(x_0, \ldots x_n)Q(x_0\cdots x_n)
\end{equation}
is meromorphic for $|x_0x_1\cdots x_n|<q^{-(n+1)/2}$. Hence, if we can prove Proposition \ref{Rformula} we will complete the proof of Theorem \ref{main}.

First we compute the residue up to a diagonal factor, using functional equations. Let $i$ be even, and, if $n$ is even, $0<i<n$. The functional equations $\sigma_i \sigma_{i-1} \sigma_{i+1} \sigma_i$ yield:
\begin{align}
&Z(\ldots x_{i-2}, x_{i-1}, x_i, x_{i+1}, x_{i+2}, \ldots) \nonumber \\
&=\frac{1}{16} \sum_{\epsilon_1, \epsilon_2, \epsilon_3, \epsilon_4 = \pm 1} \epsilon_2 \epsilon_3 q^{-2} x_{i-1}^{-2} x_i^{-4}x_{i+1}^{-2}(\epsilon_4 q^{-1/2}-\frac{1-\epsilon_2 \epsilon_3 q x_{i-1} x_i x_{i+1}}{1-\epsilon_2 \epsilon_3 q^2 x_{i-1} x_i x_{i+1}}) \nonumber \\ 
&(\epsilon_3 q^{-1/2}-\frac{1-\epsilon_1 q^{1/2} x_i x_{i+1}}{1-\epsilon_1 q^{3/2} x_i x_{i+1}}) (\epsilon_2 q^{-1/2}-\frac{1-\epsilon_1 q^{1/2} x_{i-1} x_i}{1-\epsilon_1 q^{3/2} x_{i-1} x_i}) (\epsilon_1 q^{-1/2}-\frac{1-x_i}{1-q x_i}) \nonumber \\ 
&Z(\ldots \epsilon_2 q x_{i-2} x_{i-1} x_i, \epsilon_1 \epsilon_2 \epsilon_3 \epsilon_4 x_{i-1}, \frac{\epsilon_2 \epsilon_3}{q^2 x_{i-1} x_i x_{i+1}}, \epsilon_1 \epsilon_2 \epsilon_3 \epsilon_4 x_{i+1}, \epsilon_3 q x_i x_{i+1} x_{i+2}, \ldots).
\end{align}
If we take the residue, only the terms with $\epsilon_1 \epsilon_2 \epsilon_3 \epsilon_4=1$ will contribute, and, by Proposition (\ref{residueformula1}), only even powers of $\epsilon_2$ and $\epsilon_3$ will appear. Thus the residue in fact has functional equations with scalar cocycle:
\begin{equation} \label{resfe}
R(\ldots x_{i-2}, x_i, x_{i+2}, \ldots) = (*) R(\ldots x_{i-2} x_i, \frac{1}{x_i}, x_i x_{i+2}, \ldots) 
\end{equation}
where 
\begin{align}
&(*)=\frac{1}{16} \sum_{\epsilon_1, \epsilon_2, \epsilon_3, = \pm 1} \epsilon_2 \epsilon_3 q^{2} x_i^{-4}(\epsilon_1 \epsilon_2 \epsilon_3 q^{-1/2}-\frac{1-\epsilon_2 \epsilon_3 q^{-1}x_i}{1-\epsilon_2 \epsilon_3 x_i}) \nonumber \\ 
&(\epsilon_3 q^{-1/2}-\frac{1-\epsilon_1 q^{-1/2} x_i}{1-\epsilon_1 q^{1/2} x_i}) (\epsilon_2 q^{-1/2}-\frac{1-\epsilon_1 q^{-1/2} x_i}{1-\epsilon_1 q^{1/2} x_i}) (\epsilon_1 q^{-1/2}-\frac{1-x_i}{1-q x_i}) \nonumber \\
&=\frac{(1-x_i^{-2})(1-qx_i^{-2})}{(1-x_i^2)(1-qx_i^2)}.
\end{align}

Applying the transformation $(\ldots x_{i-2}, x_i, x_{i+2}, \ldots)\mapsto (\ldots x_{i-2} x_i, \frac{1}{x_i}, x_i x_{i+2}, \ldots)$ to \ref{residueodd} or \ref{residueeven} just permutes the factors, except for the two factors $(1-x_i^2)^{-1}(1-qx_i^2)^{-1}$, which are replaced by $(1-x_i^{-2})^{-1}(1-qx_i^{-2})^{-1}$. Therefore the formulas of Proposition \ref{Rformula} satisfy the functional equations \ref{resfe}.

Let $R$ and $R'$ be two power series satisfying the functional equations \ref{resfe}. Then the ratio $R/R'$ is invariant under $(\ldots x_{i-2}, x_i, x_{i+2}, \ldots)\mapsto (\ldots x_{i-2} x_i, \frac{1}{x_i}, x_i x_{i+2}, \ldots)$ for $i$ even and, if $n$ is even, $0<i<n$. In the case of $n$ odd, this immediately implies that $R/R'$ is a diagonal power series, so the formula \ref{residueodd} is correct up to diagonal factors.

If $n$ is even, we require additional functional equations. There are two functional equations corresponding to the transformations $(\sigma_0\sigma_1\cdots\sigma_n)^2$ and $(\sigma_n\sigma_{n-1}\cdots\sigma_0)^2$. We will describe the $(\sigma_0\sigma_1\cdots\sigma_n)^2$ functional equation--the other one is similar. We have
\begin{align}
&R(x_0, x_2, x_4, \ldots x_{n-4}, x_{n-2}, x_n) \nonumber \\ &=(*) R(x_0^3x_2^3x_4^2\cdots x_n^2, x_4, x_6, \ldots x_{n-2}, x_0x_n, x_0^{-3}x_2^{-2}\cdots x_n^{-2}) 
\end{align}
where 
\begin{equation}
(*)=\frac{1-q^{1/2}(x_0x_2\cdots x_n)^{-4} x_0^{-2}}{1-q^{1/2}(x_0x_2\cdots x_n)^4 x_0^2} \prod\limits_{m=0}^1 \prod\limits_{\substack{0\leq i<n \\ \text{even}}} \frac{(1-q^{1/2}(x_0x_2\cdots x_n)^{-2m}(x_0x_2\cdots x_i)^{-2})}{(1-q^{1/2}(x_0x_2\cdots x_n)^{2m}(x_0x_2\cdots x_i)^2)}
\end{equation}
(the transformation is slightly different when $n=2$).

Finally, suppose $n$ is even and $n>2$. Consider the transformation $\sigma_0\sigma_1\sigma_n\sigma_{n-1}\sigma_0\sigma_n\sigma_1\sigma_0$. This leads to a functional equation 
\begin{align}
&R(x_0, x_2, x_4, \ldots x_{n-4}, x_{n-2}, x_n) \nonumber \\ &=(*) R(x_n^{-1}, x_0x_2x_n, x_4, \ldots x_{n-4}, x_0x_{n-2}x_n, x_0^{-1}) 
\end{align}
where 
\begin{equation}
(*)=\frac{(1-q^{1/2}x_0^{-2})(1-q^{1/2}x_n^{-2})(1-x_0^{-2}x_n^{-2})(1-qx_0^{-2}x_n^{-2})}{(1-q^{1/2}x_0^2)(1-q^{1/2}x_n^2)(1-x_0^2x_n^2)(1-qx_0^2x_n^2)}
\end{equation}
(the functional equation is slightly different when $n=4$). 

It is straightforward to show that the formula \ref{residueeven} satisfies these additional functional equations, and that they determine it up to diagonal factors. This completes the computation of $R$ up to diagonal factors. Note that the off-diagonal part $R_0$ satisfies Property \ref{symmetry}. 

With the off-diagonal factors in hand, we can compute all of $R$. In fact, by Property \ref{symmetry} we need only compute $R^{\flat}$ and $R^{\natural}$. For $n$ odd, we must show:
\begin{align}\label{residueoddflat}
&R^{\flat}(x_0 x_2 \cdots x_{n-1})= \nonumber \\ 
&=\prod_{m=0}^{\infty} (1-(x_0 x_2\cdots x_{n-1})^{2m+1})^{-1} \prod_{\substack{i,j \text{ mod } n+1 \\ \text{even}}} (1-(x_0 x_2\cdots x_{n-1})^{2m}(x_i x_{i+2}\cdots x_j)^2)^{-1}
\end{align}
where only the first factor and the factors with $j\equiv i-2$ are not yet determined. For $n$ even, we must show:
\begin{align} \label{residueevenflat}
&R^{\flat}(x_0, x_2, \ldots x_n)= \nonumber \\
&=\prod_{m=0}^{\infty} \prod_{\substack{i,j \text{ mod } n+2 \\ \text{even} \\i\neq 0, \, j\neq n}} (1-(x_0x_2\cdots x_n)^{2m}(x_ix_{i+2}\cdots x_j)^2)^{-1}
\end{align}
where only the factors with $j \equiv i-2$ are not yet determined. Further, $R^{\natural}$ has no diagonal factors.

Before starting the computation, we state a useful lemma on the combinatorics of the partition function. 

\begin{lemma}
The generating function of integer partitions $\delta^{(0)}\geq \delta^{(1)}\geq \delta^{(2)}\geq\ldots$ such that $\sum\limits _{j\equiv k \mod n} \delta^{(j)} =a_k$ is 
\begin{equation}
\prod_{m=0}^{\infty} \prod_{j=1}^{n} (1-(x_1x_2\cdots x_j)(x_1x_2\cdots x_n)^m)^{-1}
\end{equation}
\end{lemma}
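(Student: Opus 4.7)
The plan is to parametrize the partitions by their successive differences and then factor the resulting generating function. Set $e_l := \delta^{(l)} - \delta^{(l+1)}$ for each $l \geq 0$. The map $(\delta^{(l)}) \mapsto (e_l)$ is a bijection between weakly decreasing eventually-zero sequences of non-negative integers and arbitrary eventually-zero sequences of non-negative integers, with inverse $\delta^{(l)} = \sum_{k \geq l} e_k$. This converts the constrained sum over partitions into an unconstrained sum over tuples of non-negative integers.

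Next I would translate the constraint into the $e_l$ coordinates. Exchanging the order of summation,
\begin{equation*}
a_k = \sum_{j \equiv k \pmod n} \delta^{(j)} = \sum_{j \equiv k \pmod n} \sum_{l \geq j} e_l = \sum_{l \geq 0} N_l^{(k)}\, e_l,
\end{equation*}
where $N_l^{(k)} := \#\{j : 0 \leq j \leq l,\ j \equiv k \pmod n\}$. Because the $e_l$ are independent non-negative integers, the multivariable generating function factors as a product over $l$ of geometric series:
\begin{equation*}
\prod_{l \geq 0} \Bigl(1 - \prod_{k} x_k^{N_l^{(k)}}\Bigr)^{-1}.
\end{equation*}

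It remains to identify this product with the claimed formula. Writing each $l \geq 0$ uniquely as $l = mn + j - 1$ with $m \geq 0$ and $1 \leq j \leq n$, a direct count shows that among $\{0,1,\ldots,l\}$ the first $j$ residue classes (starting from the class of $0$ in cyclic order) each occur $m+1$ times, while the remaining residues occur $m$ times. Under the natural convention that sends the residue of $0$ to the variable $x_1$, the residue of $1$ to $x_2$, and so on, the $l$-th factor becomes exactly $(1 - (x_1 x_2 \cdots x_j)(x_1 x_2 \cdots x_n)^m)^{-1}$, and reindexing the product over $l$ by the pair $(m, j)$ produces the stated formula. The argument is essentially a bijective bookkeeping exercise; the only real care needed is to pin down the convention relating the residue of $j$ to the index of $x_k$ and to verify the small cyclic-counting calculation, but there is no substantive analytic or combinatorial obstacle.
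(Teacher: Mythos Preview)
Your argument is correct. The paper states this lemma without proof, treating it as a standard combinatorial fact, so there is nothing to compare against; your successive-differences bijection and the reindexing $l = mn + j - 1$ cleanly recover the claimed product, with the only ambiguity being the bookkeeping convention matching residue classes to the variables $x_1,\ldots,x_n$, which you correctly flag.
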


As a consequence, the generating function of $n$-tuples of integer partitions $\delta_1^{(j)}, \delta_2^{(j)}, \ldots  \delta_{n}^{(j)}$ such that $\sum\limits _{i+j \equiv k \mod n} \delta_i^{(j)} =a_k$ is
\begin{equation} \label{example}
\prod_{m=0}^{\infty} \prod_{i, j \text{ mod } n} (1-(x_ix_{i+1}\cdots x_j)(x_1x_2\cdots x_n)^m)^{-1}.
\end{equation}

The next proposition establishes, in the case of $n$ odd, that the diagonal part of $R^{\flat}$ matches the diagonal part of Equation \ref{residueoddflat}. Recall that 
\begin{equation}
P(x)Z_{\text{diag}}(q^{-(n+1)/2}x)=R_{\text{diag}}(x)
\end{equation}
and $Z_{\text{diag}}(q^{-(n+1)/2}x)=1+O(q)$. Here $P(x)=\sum\limits_a p_a(q)x^{a}$ and $p_a(q)$ is the value of $q^{-a(n+1)}c_{a, 2a, \ldots a, 2a}(q)$ in a series satisfying the functional equations whose diagonal coefficients are $c_{0, 0, \ldots 0}=1$, $c_{a, a, \ldots a}=0$ for all $a>0$. We explicitly compute the part of $P(x)$ which has degree $0$ in the variable $q$. 

\begin{prop}\label{oddcombinatorics}
Let $n$ be odd. Then
\begin{align}
&P(x_0 x_2\cdots x_{n-1}) \nonumber \\
&=\bigg{(}\prod_{m=0}^{\infty} (1-(x_0 x_2\cdots x_{n-1})^{2m+1})^{-1} \nonumber \\
&\hspace{3em} \prod_{\substack{i,j \text{ mod } n+1 \\ \text{even}}} (1-(x_0 x_2\cdots x_{n-1})^{2m}(x_i x_{i+2}\cdots x_j)^2)^{-1}\bigg{)}_{\text{diag}} + O(q).
\end{align}
\end{prop}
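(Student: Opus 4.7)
The plan is to compute $p_a(q) \bmod q$ by unwinding the recurrences \ref{oddrecurrence}--\ref{evenrecurrence} applied to the tuple $(a,2a,a,2a,\ldots,a,2a)$, reorganize the resulting sum as a generating function indexed by tuples of partitions, and then match it against the right-hand side of the proposition via the partition lemma.

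First, I would reduce the recurrences modulo $q$. In the odd recurrence \ref{oddrecurrence}, the prefactor $q^{a_i-(a_{i-1}+a_{i+1}-1)/2}$ contributes at $q^0$ only when $a_i = (a_{i-1}+a_{i+1}-1)/2$. In the even recurrence \ref{evenrecurrence}, the terms $q\, c_{\ldots a_i-1\ldots}(q)$ and $q\cdot q^{a_i-(a_{i-1}+a_{i+1})/2}c_{a_{i-1}+a_{i+1}-a_i-1}(q)$ are manifestly divisible by $q$, so modulo $q$ we have $c_{\ldots a_i\ldots}(q) \equiv q^{a_i-(a_{i-1}+a_{i+1})/2}\,c_{\ldots a_{i-1}+a_{i+1}-a_i\ldots}(q)$, contributing at $q^0$ only when $a_i = (a_{i-1}+a_{i+1})/2$. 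Both simplifications say that the surviving combinatorics is governed by a harmonic condition: $a_i$ equals the (possibly shifted) integer average of its two neighbors.

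Next, with the initial conditions $c_{0,\ldots,0}(q)=1$ and $c_{k,\ldots,k}(q)=0$ for $k>0$, I would express the $q^0$ part of $c_{a,2a,\ldots,a,2a}(q)$ as a signed count over admissible sequences of recurrence applications that begin at the origin, climb to $(a,2a,\ldots,a,2a)$ via backward applications (which cost no power of $q$ when the harmonic condition holds), and terminate only at the all-zero diagonal. Invoking the $a$-to-$\delta$ translation credited to Pusk\'as in the acknowledgements, I would encode each such path by a cyclically-indexed family of partitions $\delta_i^{(j)}$ whose partial sums record the coordinate increments. The excursion constraint that the target is $(a,2a,a,2a,\ldots)$ translates into partial-sum constraints of precisely the form tabulated by the partition lemma preceding the proposition.

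The last step is to read off the product. By equation \ref{example}, the generating function for these constrained partition-tuples factors as an infinite product whose factors, after tracing through the indexing, are $(1-(x_0x_2\cdots x_{n-1})^{2m+1})^{-1}$ from the odd-recurrence steps and $(1-(x_0x_2\cdots x_{n-1})^{2m}(x_ix_{i+2}\cdots x_j)^2)^{-1}$ from the even-recurrence steps, which is exactly the diagonal part of the formula in the proposition. The main obstacle will be making the $a$-to-$\delta$ translation precise: one must check that distinct recurrence paths that merely reorder commuting moves are counted once, that the cyclic interval data $(i,j)$ in the partition lemma matches the cyclic structure of the $\tilde{A}_n$ root system appearing in the product, and that no boundary contributions from the exceptional coefficients in Axiom \ref{dominance} sneak in. Once that bijection is pinned down, the remaining generating-function manipulation and diagonal extraction are routine.
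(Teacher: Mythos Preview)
Your reduction ``modulo $q$'' in the first paragraph is the crux, and it is not correct as stated. In the even recurrence \ref{evenrecurrence} the term you keep, $q^{a_i-(a_{i-1}+a_{i+1})/2}\,c_{\ldots a_{i-1}+a_{i+1}-a_i\ldots}(q)$, itself carries a positive power of $q$ whenever $a_i>(a_{i-1}+a_{i+1})/2$, so naively reducing each step of the recurrence modulo $q$ annihilates everything. What you actually need is the \emph{accumulated} power of $q$ along a full reduction path from $(a,2a,\ldots,a,2a)$ down to $(0,\ldots,0)$: the paper first shows this accumulated power is always $\geq a(n+1)$ (with equality attained), and only then discards terms that contribute strictly more. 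Doing this correctly, the simplified even recurrence is not the single reflection $a_i\mapsto a_{i-1}+a_{i+1}-a_i$ but the \emph{sum}
\[
c_{\ldots a_i\ldots}\;\leadsto\;q^{a_i-(a_{i-1}+a_{i+1})/2}\sum_{a_i'=a_{i-1}+a_{i+1}-a_i}^{(a_{i-1}+a_{i+1})/2} c_{\ldots a_i'\ldots},
\]
and the simplified odd recurrence is $c_{\ldots a_i\ldots}=0$, not a harmonic condition. Your ``harmonic equality'' should be the inequality $a_i^{(j)}\geq \tfrac12(a_{i-1}^{(j)}+a_{i+1}^{(j)})\geq a_i^{(j+1)}\geq a_{i-1}^{(j)}+a_{i+1}^{(j)}-a_i^{(j)}$, and this range is exactly what the $\delta$-variables must encode.

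This also means your attribution of the factor $\prod_m(1-(x_0\cdots x_{n-1})^{2m+1})^{-1}$ to ``odd-recurrence steps'' cannot be right: odd recurrences contribute nothing to the constant term. In the paper that factor arises from a parity argument---one extracts from the family $(\delta_i^{(j)})$ a single strictly decreasing partition $\gamma$ recording the rows where all $\delta_i^{(j)}$ are odd, leaving behind $(n+1)/2$ partitions with even parts; the generating function for $\gamma$ is $\prod_m(1-x^{2m+1})^{-1}$, and the remaining even-part partitions give the second product via the lemma. So the bijection you are gesturing at exists, but its input is chains satisfying the range inequalities above (with a fixed alternating order of applying the $\sigma_i$), not sequences of harmonic moves, and the two product factors separate along parity rather than along odd versus even recurrences.
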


This implies that
\begin{align}
&R^{\flat}_{\text{diag}}(x_0 x_2\cdots x_{n-1}) \nonumber \\
&=\bigg{(}\prod_{m=0}^{\infty} (1-(x_0 x_2\cdots x_{n-1})^{2m+1})^{-1} \nonumber \\
&\hspace{3em} \prod_{\substack{i,j \text{ mod } n+1 \\ \text{even}}} (1-(x_0 x_2\cdots x_{n-1})^{2m}(x_i x_{i+2}\cdots x_j)^2)^{-1}\bigg{)}_{\text{diag}}.
\end{align}

\begin{proof}
The proof requires closely examining the combinatorics of the recurrences on coefficients of $Z$. Recall the statement of the recurrence associated to functional equation $\sigma_i$: for $a_{i-1}+a_{i+1}$ odd,
\begin{equation} 
c_{\ldots a_i, \ldots}=q^{a_i-(a_{i-1}+a_{i+1}-1)/2} c_{\ldots a_{i-1}+a_{i+1}-1-a_i, \ldots}
\end{equation}
and for $a_{i-1}+a_{i+1}$ even, applying equation \ref{evenrecurrence} repeatedly gives
\begin{equation}
c_{\ldots a_i, \ldots}=q^{a_i-(a_{i-1}+a_{i+1})/2}(c_{\ldots (a_{i-1}+a_{i+1})/2, \ldots} + \sum_{a_i'=a_{i-1}+a_{i+1}-a_i}^{(a_{i-1}+a_{i+1})/2-1} (c_{\ldots a_i', \ldots} -q c_{\ldots a_i'-1, \ldots})).
\end{equation}

Starting with $c_{a_0, \ldots a_n}$ we will apply the recurrences in the following order: first, reduce as far as possible with the odd $\sigma_i$, then reduce the result as far as possible with the even $\sigma_i$, then reduce that result as far as possible with the odd $\sigma_i$, and so on. Any coefficient will eventually be reduced to a linear combination of diagonal coefficients in this way. The lowest term in $p_a(q)$ represents the number of paths from $c_{a, 2a, a, 2a, \ldots, a, 2a}$ to $c_{0, 0,\ldots 0}$ via these recurrences, gaining as small a power of $q$ as possible. 

Given any $c_{a_0, \ldots a_n}$, assuming without loss of generality that $\sum\limits_{i \text{ odd}} a_i \geq \sum\limits_{i \text{ even}} a_i$, we apply the recurrences $\sigma_i$ for $i$ odd to reduce as far as possible. Any coefficient $c_{a_0, a_1', \ldots a_{n-1}, a_n'}$ in the resulting expression now has $\sum\limits_{i \text{ odd}} a_i' \leq \sum\limits_{i \text{ even}} a_i$. Furthermore, it is multiplied by a factor of at least $q^{\sum\limits_{i \text{ odd}} a_i - \sum\limits_{i \text{ even}} a_i}$, and more than this if any of the $a_i$ for $i$ odd could not be reduced. If we continue reducing this way until we reach $c_{0, 0, \ldots 0}$, it will be multiplied by a factor of at least $q^{\text{Max}(\sum\limits_{i \text{ odd}} a_i, \sum\limits_{i \text{ even}} a_i)}$. In particular, reducing $c_{a, 2a, a, 2a, \ldots, a, 2a}$ to $c_{0, 0,\ldots 0}$ involves multiplying by at least $q^{a(n+1)}$. This is the correct order since one possible path is 
\begin{equation}
c_{a, 2a, a, 2a, \ldots a, 2a} \to q^{a(n+1)/2} c_{a, 0, a, 0, \ldots a, 0} \to q^{a(n+1)} c_{0, 0, \ldots 0}.
\end{equation}
Therefore $p_a(q)$ is a polynomial in $q$ with nonzero constant term.

Because we are only considering the lowest term in $p_a(q)$, we can discard all terms in the $\sigma_i$ recurrence with a factor greater than $q^{a_i-(a_{i-1}+a_{i+1})/2}$. This leads to greatly simplified recurrences: if $a_{i-1}+a_{i+1}$ is even, then 
\begin{equation} \label{evenrec}
c_{\ldots a_i, \ldots}=q^{a_i-(a_{i-1}+a_{i+1})/2}\sum_{a_i'=a_{i-1}+a_{i+1}-a_i}^{(a_{i-1}+a_{i+1})/2} c_{\ldots a_i', \ldots}
\end{equation}
and if $a_{i-1}+a_{i+1}$ is odd, then
\begin{equation} \label{oddrec}
c_{\ldots a_i, \ldots}=0.
\end{equation}

We have now reduced the problem of computing the constant coefficient in $p_a(q)$ to counting chains of nonnegative integer indices:
\begin{align*}
&a_0, &&a_1, &&a_2, &&a_3, &&\ldots &&a_{n-1}, &&a_n\\
&a_0', &&a_1', &&a_2', &&a_3', &&\ldots &&a_{n-1}', &&a_n'\\
&a_0'', &&a_1'', &&a_2'', &&a_3'', &&\ldots &&a_{n-1}'', &&a_n''\\
&\cdots &&\cdots &&\cdots &&\cdots &&\cdots &&\cdots &&\cdots\\
&a_0^{(\ell)}, &&a_1^{(\ell)}, &&a_2^{(\ell)}, &&a_3^{(\ell)}, &&\ldots &&a_{n-1}^{(\ell)}, &&a_n^{(\ell)}\\
\end{align*}
such that:
\begin{condition} \label{ends} 
we have the boundary conditions $(a_0, a_1, \ldots a_{n-1}, a_n)=(a, 2a, \ldots a, 2a)$ and $(a_0^{(\ell)}, a_1^{(\ell)}, \ldots a_{n-1}^{(\ell)}, a_n^{(\ell)})=(0, 0 \ldots 0, 0)$
\end{condition}
\begin{condition}
$a_i^{(j)}=a_i^{(j+1)}$ if $i$ is even and $j$ is even, or if $i$ is odd and $j$ is odd.
\end{condition}
\begin{condition} \label{same} 
$a_i^{(j)}+a_{i+2}^{(j)}$ is even for all $i, j$. 
\end{condition}
\begin{condition} \label{ineq} 
For $i$ even and $j$ odd, or $i$ odd and $j$ even, 
\begin{equation}
a_i^j \geq \frac{1}{2}(a_{i-1}^{(j)}+a_{i+1}^{(j)}) \geq a_{i}^{(j+1)} \geq a_{i-1}^{(j)}+a_{i+1}^{(j)}-a_i^{(j)}.
\end{equation}
\end{condition}
Note that the indices $i$ are still numbered modulo $n+1$ here.

We will rephrase this counting problem once before solving it. For $i$ modulo $n+1$ even, and $1 \leq j \leq \ell$, let $\delta_i^{(j)}=a_{i+j-1}^{(j-1)}-a_{i+j-2}^{(j)}$. The last inequality of condition (\ref{ineq}) implies that 
\begin{equation}
\delta_i^{(1)}\geq\delta_i^{(2)}\geq\delta_i^{(3)}\geq\cdots\geq \delta_i^{(\ell)}\geq 0
\end{equation}
Thus a chain of indices as above gives rise to an $(n+1)/2$-tuple of integer partitions $\delta_0^{(j)}, \delta_2^{(j)}, \ldots \delta_{n-1}^{(j)}$ satisfying the following two conditions, which correspond to conditions \ref{same} and \ref{ends}.
\begin{condition}
For fixed $j$, the $\delta_i^{(j)}$ are either all even or all odd.
\end{condition}
\begin{condition} \label{tele} 
$\sum\limits_j \delta_{i-2j}^{(j)}=a$ for all $i$.
\end{condition}

We can reconstruct the chain of indices $a_i^{(j)}$ from the partitions $\delta_i^{(j)}$: for $i$, $j$ both odd or both even, \begin{equation}
a_i^{(j)}=\sum\limits_{k=j+1}^\ell \delta_{i+j+2-2k}^{(k)}.
\end{equation}
By comparing such expressions, we find that for any $i$, $j$ both odd or both even, $a_i^{(j)}\geq a_{i-1}^{(j+1)}$ and $a_i^{(j)}\geq a_{i+1}^{(j+1)}$. This appears to be a stronger condition than \ref{ineq}; in fact, it must be equivalent.

To count the $\delta_i^{(j)}$, first note that there exists a unique strictly decreasing partition $\gamma$ such that for all $i$, there exists a partition $\tilde{\delta}_i$ with even entries such that $\delta_i=\tilde{\delta}_i + \gamma^*$. Here $*$ denotes the conjugate partition. We may take $\gamma$ to be the set $\lbrace j : d_i^{(j)} \text{ odd}\rbrace$, in decreasing order. If $\gamma_1$ and $\gamma_2$ have this same property, then $\gamma_1^*+\gamma_2^*$ has all even entries, and, since $\gamma_1$ and $\gamma_2$ are strictly decreasing, this implies that they are equal. 

Since the generating function of strictly decreasing partitions is the same as the generating function of odd partitions, $\prod\limits_{k=0}^{\infty}(1-x^{2k+1})^{-1}$, the first factor of \ref{residueoddflat} will account for the choice of $\gamma$.

Now it suffices to count $(n+1)/2$-tuples of even partitions $\tilde{\delta}_i^{(j)}$ satisfying condition (\ref{tele}). But by the logic of equation \ref{example}, this is precisely the diagonal part of the second factor of \ref{residueoddflat}.
\end{proof}

This completes the computation, and the proof of the main theorem, in the case of $n$ odd. In the case of $n$ even, the proof is similar, supplemented by a lemma which does not hold in the odd case.

\begin{lemma}
Suppose $n$ is even. Then $P(x)$ is an even power series in $x$.
\end{lemma}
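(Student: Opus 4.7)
The plan is to reduce evenness of $P(x)$ to the vanishing of a single family of coefficients in a specific degenerate choice of $Z$, and then to attack that vanishing by an adaptation of the chain-and-partition combinatorics used in the proof of Proposition \ref{oddcombinatorics}.

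First I would use the identity $P(x)\, Z_{\text{diag}}(q^{-(n+1)/2}x) = R_{\text{diag}}(x)$ together with the fact that $P(x)$ is the same for every series satisfying the functional equations \ref{oddfe} and \ref{evenfe}. Choosing $Z$ to be the ``degenerate'' series with diagonal coefficients $c_{0,\ldots,0}(q)=1$ and $c_{a,\ldots,a}(q)=0$ for all $a>0$, we have $Z_{\text{diag}}\equiv 1$, so $P(x)=R_{\text{diag}}(x)$ for this $Z$. By the even-$n$ half of Proposition \ref{residueformula1}, the coefficient of $x^a$ in $R_{\text{diag}}(x)$ equals $q^{3a/2-(n+2)a}\, c_{a,2a,a,2a,\ldots,2a,a}(q)$. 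Consequently the lemma reduces to showing that, in the degenerate $Z$, the palindromic coefficient $c_{a,2a,a,\ldots,2a,a}(q)$ vanishes identically whenever $a$ is odd.

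Second, I would carry out the reduction of $c_{a,2a,a,\ldots,2a,a}(q)$ to a linear combination of diagonal coefficients $c_{b,\ldots,b}(q)$ by repeated application of the recurrences \ref{oddrecurrence} and \ref{evenrecurrence}, performed in alternating rows (all odd-indexed positions, then all even-indexed positions, and so on), exactly as in the proof of Proposition \ref{oddcombinatorics}. Since $c_{b,\ldots,b}(q)=0$ for $b>0$, only the boundary term $c_{0,\ldots,0}(q)=1$ survives, and the claim becomes a purely combinatorial statement: the signed and $q$-weighted count of chains of admissible index tuples reducing $(a,2a,a,\ldots,a)$ to $(0,\ldots,0)$ is zero for $a$ odd. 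Each such chain should again be encoded by a tuple of integer partitions $\delta_i^{(j)}$, with the parity condition that for each fixed ``row'' $j$ all entries $\delta_i^{(j)}$ have a common parity, and the telescoping sum condition inherited from Condition \ref{tele}, now cycled modulo $n+1$ (which is \emph{odd} in the present case).

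Third, I would extract the key parity obstruction. In the odd-$n$ analysis the strictly decreasing partition $\gamma$ that records the ``odd-parity rows'' was unconstrained, which produced the factor $\prod_m(1-(x_0x_2\cdots x_{n-1})^{2m+1})^{-1}$ in \ref{residueoddflat}. For $n$ even, the crucial change is that the telescoping index $i-2j\bmod(n+1)$ now runs through \emph{all} residues (since $\gcd(2,n+1)=1$), not merely the even residues as in the odd case. Summing the telescoping condition over all $i$ and comparing parities forces $|\gamma|$ to be even, so $a=\sum_j|\delta_\cdot^{(j)}|$ becomes constrained to an even value. This gives the vanishing at the $q^0$ level.

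The main obstacle will be upgrading this mod-$q$ parity argument to a statement about the \emph{full} power series $P(x)$. For this, one has to carry the extra terms in the exact even recurrence \ref{evenrecurrence} (the pieces $q\,c_{\cdots a_i-1\cdots}$ and $q^{a_i-(a_{i-1}+a_{i+1})/2+1}c_{\cdots a_{i-1}+a_{i+1}-a_i-1\cdots}$), which flip the parity of $\sum_j a_j$ but always at the cost of a compensating factor of $q$. The expected resolution is to track the mod-$2$ invariant $\phi=(\sum_j a_j)+(\text{accumulated power of }q)\pmod 2$, which is preserved by both recurrences, together with the palindromic symmetry $a_j\mapsto a_{n-j}$ of the boundary tuple to pair up chains contributing to odd $a$-coefficients of $P(x)$ so that they cancel in the degenerate $Z$.
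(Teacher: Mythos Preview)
Your reduction in the first step is correct: showing that $P(x)$ is even amounts to proving that, in the degenerate series (with $c_{0,\ldots,0}=1$ and all other diagonals zero), the coefficient $c_{a,2a,a,\ldots,2a,a}(q)$ vanishes identically whenever $a$ is odd.

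The gap is in your third step. The quantity $\phi=\bigl(\sum_j a_j\bigr)+(\text{accumulated power of }q)\bmod 2$ is \emph{not} preserved by the recurrences. In the even recurrence \ref{evenrecurrence}, the middle term sends $a_i\mapsto a_{i-1}+a_{i+1}-a_i$ while adding $a_i-(a_{i-1}+a_{i+1})/2$ to the exponent of $q$; the net change in $\phi$ is $(a_{i-1}+a_{i+1})/2-a_i$, which has no fixed parity. The odd recurrence \ref{oddrecurrence} likewise changes $\phi$ by $(a_{i-1}+a_{i+1}-1)/2-a_i$. So $\phi$ cannot serve as the invariant you need, and the palindromic pairing $a_j\mapsto a_{n-j}$ is left without any mechanism to force cancellation. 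Your second step, even if the partition translation could be carried out with indices taken modulo the odd number $n+1$, only controls the $q^0$ term of $p_a(q)$ and cannot stand on its own.

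The paper bypasses all of this with a much shorter argument using an invariant that depends only on the index tuple $(a_0,\ldots,a_n)$, not on the power of $q$. View the indices cyclically and break them into maximal runs of consecutive \emph{odd} entries; the invariant is the parity of the number of such runs having even length (equivalently, the parity of $\#\{i:a_i\text{ and }a_{i+1}\text{ both odd}\}$). A recurrence can flip the parity of $a_i$ only when $a_{i-1}+a_{i+1}$ is even, i.e.\ when $a_{i-1}$ and $a_{i+1}$ have the same parity; in that situation flipping $a_i$ changes the count of even-length odd runs by $0$ or $\pm 2$, so the invariant is preserved by every term of both recurrences. For odd $a$ the tuple $(a,2a,a,\ldots,2a,a)$ has exactly one even-length odd run (the wrap-around pair $a_n,a_0$), while $(0,\ldots,0)$ has none. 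Hence no chain of recurrences connects them at all, and $c_{a,2a,\ldots,2a,a}(q)=0$ identically---not merely modulo $q$. This is the idea your plan is missing.
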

\begin{proof}
A list of indices $a_0, a_1, \ldots a_n$ can be broken into blocks of consecutive even or odd indices. The list $a, 2a, a, 2a, \ldots a$ for $a$ odd has a single even-length block of odd indices: the first and last $a$. Since the recurrences can only change the parity of an index if the sum of its neighbors is even, they preserve the property of having an odd number of even-length blocks of odd indices. In particular, there is no path from $a, 2a, a, 2a, \ldots a$ to $0, 0, \ldots 0$ via the recurrences.
\end{proof}

Since the off-diagonal factors of $R$ are all even in $x_0, x_2, \ldots x_n$ and $P(x)$ is even, the diagonal factors of $R^{\flat}R^{\natural}$ must be even as well. Thus $R$ is an even power series, which is not obvious a priori. In particular, $R^{\natural}$ cannot contain diagonal factors, so it suffices to describe the diagonal part of $R^{\flat}$. The following proposition is the analogue of \ref{oddcombinatorics}. The proof is parallel, so many details are omitted. 

\begin{prop}
Let $n$ be even. Then 
\begin{align}
&P(x_0x_2\cdots x_n)= \nonumber \\
&=\left(\prod_{m=0}^{\infty} \prod_{\substack{i,j \text{ mod } n+2 \\ \text{even} \\i\neq 0, \, j\neq n}} (1-(x_0x_2\cdots x_n)^{2m}(x_ix_{i+2}\cdots x_j)^2)^{-1}\right)_{\text{diag}}+O(q^{1/2}).
\end{align}
\end{prop}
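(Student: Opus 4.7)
The plan is to parallel the proof of Proposition \ref{oddcombinatorics}, with two modifications for $n$ even: invoking the preceding lemma to restrict to even powers of $x$, and accounting for the existence of $R^\natural$-type factors at $\beta = 1/2$, which shifts the error term from $O(q)$ to $O(q^{1/2})$. By Proposition \ref{residueformula1}, the leading-order part of $P(x)$ in $q$ counts reductions of $c_{a, 2a, a, 2a, \ldots, 2a, a}(q)$ (with indices $(a_0, a_1, \ldots, a_n) = (a, 2a, \ldots, 2a, a)$) down to $c_{0, 0, \ldots, 0}(q)$ under the simplified recurrences \ref{evenrec} and \ref{oddrec}.

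As in the odd case, this becomes a count of chains $(a_0^{(j)}, a_1^{(j)}, \ldots, a_n^{(j)})$ for $0 \leq j \leq \ell$ satisfying analogues of Conditions \ref{ends}--\ref{ineq}. The one essential structural change is that the indices $i$ range over $\{0, 1, \ldots, n\}$ with no cyclic identification between $0$ and $n$; this lack of wraparound, together with the fact that both boundary positions of the initial tuple carry $a$ rather than $2a$, will ultimately drive the exclusions $i \neq 0$ and $j \neq n$ in the final product.

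I would next encode each chain as a tuple of integer partitions $\delta_i^{(j)}$ for $i$ even and $1 \leq j \leq \ell$, exactly as in the odd case, obtaining $(n/2 + 1)$-tuples of partitions satisfying a telescoping condition of the form $\sum_j \delta_{i - 2j}^{(j)} = a$. By the preceding lemma $P(x)$ is even in $x$, so only $a$ even contributes; on the chain side this forces every partition $\delta_i^{(j)}$ to have exclusively even entries. Consequently the auxiliary strictly decreasing partition $\gamma$ from the odd case becomes trivial, and the corresponding factor $\prod_m (1 - x^{2m+1})^{-1}$ does not appear, in agreement with the proposition's product.

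Finally, I would apply the generating-function identity \ref{example}, adapted to the cyclic group $\Z/(n+2)\Z$ and to partitions with even entries only, to count the $(n/2+1)$-tuples of even partitions under the telescoping constraint. The pairs $(i, j)$ that appear are precisely the even residues modulo $n+2$ with $i \neq 0$ and $j \neq n$, with the product $x_i x_{i+2} \cdots x_j$ interpreted cyclically when $i > j$. Projecting to the diagonal then recovers the claimed formula for $P(x_0 x_2 \cdots x_n)$ up to $O(q^{1/2})$, the $q^{1/2}$ reflecting the half-integer-exponent contributions from $R^\natural$. The main obstacle is verifying the boundary exclusions $i \neq 0, j \neq n$ precisely: they emerge from a careful analysis of the recurrences at positions $0$ and $n$, where the non-cyclic structure collapses certain partition degrees of freedom, and are the combinatorial shadow of the same asymmetry responsible for the $q^{-1/4}$ weights in the definition of the residue $R$ for $n$ even.
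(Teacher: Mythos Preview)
Your overall strategy---parallel the odd case, reduce to chain-counting, translate to partitions, apply the generating-function lemma---is correct, and matches the paper's approach. However, you have misdiagnosed the structural difference between the odd and even cases, and this would cause the argument to fail at the chain-counting step.

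You write that for $n$ even ``the indices $i$ range over $\{0,1,\ldots,n\}$ with no cyclic identification between $0$ and $n$.'' This is false: the Dynkin diagram of $\tilde{A}_n$ is a cycle regardless of the parity of $n$, and the recurrence $\sigma_0$ still couples $a_0$ to both $a_1$ and $a_n$. The genuine obstruction is that $n+1$ is odd, so one cannot alternate between ``all odd $\sigma_i$'' and ``all even $\sigma_i$'' as in Proposition~\ref{oddcombinatorics}---there is no consistent bipartition of $\Z/(n+1)\Z$. The paper therefore applies the recurrences in a \emph{shifting} order: first $\sigma_1\sigma_3\cdots\sigma_{n-1}$, then $\sigma_2\sigma_4\cdots\sigma_n$, then $\sigma_3\sigma_5\cdots\sigma_{n+1}$, and so on. This changes the chain conditions: instead of ``$a_i^{(j)}=a_i^{(j+1)}$ when $i,j$ have the same parity,'' one gets ``$a_i^{(j)}=a_i^{(j+1)}$ when $i\in\{j,j+2,\ldots,j+n\}$.'' The cyclic same-parity condition~\ref{same}, run around a cycle of odd length, forces all $a_i^{(j)}$ to be even (not merely of equal parity), which is where the even-only partitions come from---not directly from the lemma on $P(x)$.

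Consequently, your explanation of the exclusions $i\neq 0$, $j\neq n$ as arising from ``non-cyclic structure'' is also off. In the paper, one defines $\delta_i^{(j)}$ only for $i\in\{2,4,\ldots,n\}$, then \emph{artificially} sets $\delta_0^{(j)}=0$ and reindexes modulo $n+2$ rather than $n+1$. The constraint $i\neq 0$ in the product is exactly the condition $\delta_0=0$; the further exclusion $j\neq n$ is obtained afterward by observing that the two products have the same diagonal part. None of this comes from a loss of cyclicity. Without the shifting order and the mod-$(n+2)$ reindexing trick, the translation to partitions and the application of~\ref{example} would not go through.
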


This implies that 
\begin{align}
&R^{\flat}_{\text{diag}}(x_0x_2\cdots x_n)= \nonumber \\
&=\left(\prod_{m=0}^{\infty} \prod_{\substack{i,j \text{ mod } n+2 \\ \text{even} \\i\neq 0, \, j\neq n}} (1-(x_0x_2\cdots x_n)^{2m}(x_ix_{i+2}\cdots x_j)^2)^{-1}\right)_{\text{diag}}
\end{align}
which completely determines $R$.

\begin{proof}
We apply the $\sigma_i$ recurrences to $c_{a, 2a, a, 2a,\ldots a}(q)$ in the following order: first, apply $\sigma_1\sigma_3\cdots\sigma_{n-1}$, then $\sigma_2\sigma_4\cdots \sigma_{n}$, then $\sigma_3\sigma_5\cdots\sigma_{n+1}$ (recall that $\sigma_{n+1}=\sigma_0$), etc. Eventually, every index will be reduced to zero by these recurrences. 

If we apply the recurrences $\sigma_1\sigma_3\cdots\sigma_{n-1}$ to an arbitrary $c_{a_0, a_1,\ldots a_n}$, we obtain a linear combination of lower coefficients $c_{a_0, a_1', a_2, a_3', \ldots a_n}$, each of which is multiplied by $q$ to the power of at least $a_1+a_3+\cdots+a_{n-1}-(a_2+a_4+\cdots+a_{n-2})-\frac{1}{2}(a_0+a_n)$, and more than this if any of the $a_1, a_3, \ldots a_{n-1}$ could not be reduced. Repeating this process with $\sigma_2 \sigma_4\cdots\sigma_n$ and so on until we reach $c_{0, 0, \ldots 0}$, we gain a factor of at least $q^{a_1+a_3+\cdots+a_{n-1}+a_n/2}$. This lower bound is actually the correct order for the coefficient $c_{a, 2a, a, 2a,\ldots a}$ with $a$ even, since one possible path is 
\begin{equation}
c_{a, 2a, a, 2a,\ldots a} \to q^{a(n-1)/2}c_{a, 0, a, 0,\ldots a} \to q^{an-3a/2}c_{a, 0, \ldots 0} \to q^{an-a/2}c_{0, \ldots 0}
\end{equation}
Hence $p_a(q)$ is a polynomial in $q^{1/2}$ with nonzero constant coefficient.

Once again, any term in the $\sigma_i$ recurrence which carries a power of $q$ greater than $a_i-\frac{1}{2}(a_{i-1}+a_{i+1})$ can be ignored, and we have the simplified recurrences \ref{evenrec} and \ref{oddrec}. 

We must count chains of nonnegative indices:
\begin{align*}
&a_0, &&a_1, &&a_2, &&a_3, &&\ldots &&a_{n-1}, &&a_n \\
&a_0', &&a_1', &&a_2', &&a_3', &&\ldots &&a_{n-1}', &&a_n' \\
&a_0'', &&a_1'', &&a_2'', &&a_3'', &&\ldots &&a_{n-1}'', &&a_n \\
&\cdots &&\cdots &&\cdots &&\cdots &&\cdots &&\cdots &&\cdots \\
&a_0^{(\ell)}, &&a_1^{(\ell)}, &&a_2^{(\ell)}, &&a_3^{(\ell)}, &&\ldots &&a_{n-1}^{(\ell)}, &&a_n^{(\ell)} \\
\end{align*}
such that:
\begin{condition} We have $(a_0, a_1, a_2, a_3, \ldots a_{n-1}, a_n)=(a, 2a, a, 2a, \ldots, 2a, a)$ and $(a_0^{(\ell)}, a_1^{(\ell)}, a_2^{(\ell)}, a_3^{(\ell)}, \ldots a_{n-1}^{(\ell)}, a_n^{(\ell)})=(0,0,0,0,\ldots 0,0)$.
\end{condition}
\begin{condition} All $a_i^{(j)}$ are even. (This is the analogue of condition \ref{same} when $n$ is even.)
\end{condition}
\begin{condition} If $i\in \lbrace j, j+2, j+4, \ldots j+n \rbrace$, then $a_{i}^{(j)}=a_{i}^{(j+1)}$.
\end{condition}
\begin{condition}\label{evenineq} If $i\in \lbrace j+1, j+3, j+5, \ldots j+n-1 \rbrace$, then 
\begin{equation} 
a_{i}^{(j)}\geq \frac{1}{2}(a_{i-1}^{(j)}+a_{i+1}^{(j)})\geq a_{i}^{(j+1)}\geq a_{i-1}^{(j)}+a_{i+1}^{(j)}-a_{i}^{(j)}
\end{equation}
\end{condition}

To further simplify, for $i \in \lbrace 2, 4, 6, \ldots n \rbrace$ and $1 \leq j \leq \ell$, let $\delta_i^{(j)}=a_{i+j-1}^{(j-1)}-a_{i+j-2}^{(j)}$. Then by condition \ref{evenineq} above, we have $\delta_i^{(1)}\geq \delta_i^{(2)} \geq \delta_i^{(3)} \geq \cdots \geq \delta_i^{(\ell)} \geq 0$. To simplify notation, we also set $\delta_0^{(j)}=0$ for all $j$, and we take the lower index $i$ of $\delta_i$ modulo $n+2$ instead of $n+1$. We are now counting $\frac{n}{2}+1$-tuples of integer partitions $\delta_0, \delta_2, \delta_4, \ldots \delta_n$ such that:
\begin{condition} All $\delta_i^{(j)}$ are even.
\end{condition}
\begin{condition} $\delta_0^{(j)}=0$
\end{condition}
\begin{condition} $\sum_{j=1}^{\ell} \delta_{i-2j}^{(j)}=a$ for all $i$.
\end{condition}
The indices $a_i^{(j)}$ can all be recovered from the partitions $\delta_i$ satisfying these conditions.

By the logic of Equation \ref{example}, the generating function of such sets of partitions is the diagonal part of the series:
\begin{equation}
\prod_{m=0}^{\infty} \prod_{\substack{i,j \text{ mod } n+2 \\ \text{even} \\i\neq 0}} (1-(x_0x_2\cdots x_n)^{2m}(x_ix_{i+2}\cdots x_j)^2)^{-1}
\end{equation}
which is the same as the diagonal part of 
\begin{equation}
\prod_{m=0}^{\infty} \prod_{\substack{i,j \text{ mod } n+2 \\ \text{even} \\i\neq 0, \, j\neq n}} (1-(x_0x_2\cdots x_n)^{2m}(x_ix_{i+2}\cdots x_j)^2)^{-1}.
\end{equation}
\end{proof}

This completes the proof of the main theorem in the case of $n$ even.

\bibliographystyle{amsplain}
\bibliography{MDSbib}

\end{document}